\renewcommand\nomgroup[1]{%
	\ifthenelse{\equal{#1}{A}}{%
		\item[\textbf{Dimensions}]}{%
	\ifthenelse{\equal{#1}{B}}{%
		\item[\textbf{Dynamics}]}{%
	\ifthenelse{\equal{#1}{S}}{%
		\item[\textbf{Sensitivity Analysis}]}{%
	\ifthenelse{\equal{#1}{G}}{%
		\item[\textbf{General}]}{%
	\ifthenelse{\equal{#1}{U}}{%
		\item[\textbf{Subscripts}]}{%
	\ifthenelse{\equal{#1}{X}}{%
		\item[\textbf{Other Symbols}]}{%
	{}}}}}}}}
\newtheorem{theorem}{Theorem}
\newdefinition{definition}{Definition}
\newdefinition{remark}{Remark}
\newdefinition{assumption}{Assumption}
\newproof{pf}{Proof}
\newcommand{\bq}{{q}}
\newcommand{\brho}{{\rho}}
\newcommand{\blambda}{{\mu}}
\newcommand{\bv}{{v}}
\newcommand{\bV}{{V}}
\newcommand{\bZ}{{ Z}}
\newcommand{\bx}{{x}}
\newcommand{\bX}{{X}}
\newcommand{\bz}{{z}}
\newcommand{\bPhi}{{ \Phi}}
\newcommand{\timp}{{t_{\rm eve}}}
\newcommand{\beforeimp}{|_{\timp}^{-}}
\newcommand{\afterimp}{|_{\timp}^{+}}
\newcommand{\atimp}{|_{\timp}}
\newcommand{\nc}{{n_c}}
\newcommand{\bg}{{g}}
\newcommand{\fun}{{ f}}
\newcommand{\Force}{\mathsf{F}}
\newcommand{\Faccel}{\mathsf{C}}
\newcommand{\bQ}{{ Q}}
\newcommand{\Mass}{\mathsf{M}}
\newcommand{\Rez}{\mathsf{R}}
\newcommand{\bzero}{\mathsf{0}}
\newcommand{\Zero}{\mathsf{0}}
\newcommand{\bI}{\mathsf{I}}
\newcommand{\Permutation}{\mathsf{P}}
\newcommand{\fin}{{\left(\,t,\,\bq,\,\bv,\,\brho \, \right)}}
\newcommand{\fdaeimpv}{{ \fun^{\scalebox{0.5}{\rm DAE-imp-$\bv$}}}}  
\newcommand{\fdaeimpl}{{ \fun^{\scalebox{0.5}{\rm DAE-imp-$\mu$}}}}  
\newcommand{\fdaeimplbq}{{ \fun_\bq^{\scalebox{0.5}{\rm DAE-imp-$\mu$}}}}
\newcommand{\fdaeimplbv}{{ \fun_\bv^{\scalebox{0.5}{\rm DAE-imp-$\mu$}}}}
\newcommand{\fdaeimplbrho}{{ \fun_\brho^{\scalebox{0.5}{\rm DAE-imp-$\mu$}}}}  
\newcommand{\fdaeimpvbq}{{ \fun_\bq^{\scalebox{0.5}{\rm DAE-imp-$\bv$}}}}
\newcommand{\fdaeimpvbv}{{ \fun_\bv^{\scalebox{0.5}{\rm DAE-imp-$\bv$}}}}
\newcommand{\fdaeimpvbrho}{{ \fun_\brho^{\scalebox{0.5}{\rm DAE-imp-$\bv$}}}} 
\newcommand{\feom}{{ \fun^{\scalebox{0.5}{\rm eom}}}}
\newcommand{\feomq}{{ \fun_\bq^{\scalebox{0.5}{\rm eom}}}}
\newcommand{\feomv}{{ \fun_\bv^{\scalebox{0.5}{\rm eom}}}}
\newcommand{\feomzeta}{{ \fun_\zeta^{\scalebox{0.5}{\rm eom}}}}
\newcommand{\feomrho}{{\fun_{\brho}^{\scalebox{0.5}{\rm eom}}}}
\newcommand{\feomrhoi}{{\fun_{\brho_i}^{\scalebox{0.5}{\rm eom}}}}
\newcommand{\fdaeq}{{ \fun_\bq^{\scalebox{0.4}{\rm DAE}}}}
\newcommand{\fdaev}{{ \fun_\bv^{\scalebox{0.4}{\rm DAE}}}}
\newcommand{\fdaelb}{{ \fun^{\scalebox{0.4}{\rm DAE-{\scalebox{1.5}{$\mu$}}}}}}
\newcommand{\fdaelbq}{{ \fun_\bq^{\scalebox{0.4}{\rm DAE-{\scalebox{1.5}{$\mu$}}}}}}
\newcommand{\fdaelbv}{{ \fun_\bv^{\scalebox{0.4}{\rm DAE-{\scalebox{1.5}{$\mu$}}}}}}
\newcommand{\fdaelbrho}{{ \fun_{\brho}^{\scalebox{0.4}{\rm DAE-{\scalebox{1.5}{$\mu$}}}}}}
\newcommand{\fdaedv}{{ \fun^{\scalebox{0.4}{\rm DAE-{\scalebox{1.5}{$\dbv$}}}}}}
\newcommand{\fdaedvq}{{ \fun_\bq^{\scalebox{0.4}{\rm DAE-{\scalebox{1.5}{$\dbv$}}}}}}
\newcommand{\fdaedvv}{{ \fun_\bv^{\scalebox{0.4}{\rm DAE-{\scalebox{1.5}{$\dbv$}}}}}}
\newcommand{\fdaedvrho}{{ \fun_{\brho}^{\scalebox{0.4}{\rm DAE-{\scalebox{1.5}{$\dbv$}}}}}}
\newcommand{\fdaerho}{{ \fun_\brho^{\scalebox{0.4}{\rm DAE}}}}
\newcommand{\Z}{\bZ \fin}
\newcommand{\bu}{u}
\renewcommand{\Re}{{\mathds{R}}}
\newcommand{\dPhi}{{\dot \bPhi}}
\newcommand{\ddPhi}{{\ddot \bPhi}}
\newcommand{\dPhidq}{{\bPhi}_{\bq}}
\newcommand{\dPhidqq}{{\bPhi}_{\bq , \, \bq}}
\newcommand{\dPhidqdrho}{\bPhi_{\bq, \, \brho}}
\newcommand{\dPhidtdq}{\bPhi_{t, \, \bq }}
\newcommand{\dPhidqplus}{{{\bPhi}^+_{\bq}}}
\newcommand{\dPhidqqplus}{{{\bPhi}^+_{\bq , \, \bq}}}
\newcommand{\dPhidtdrhoplus}{{\bPhi^+_{t,\, \brho }}}
\newcommand{\dPhidqdrhoplus}{{\bPhi^+_{\bq, \, \brho}}}
\newcommand{\dPhidtdqplus}{{\bPhi^+_{t, \, \bq }}}
\newcommand{\dPhidtdvplus}{{\bPhi^+_{t, \,\bv }}}
\newcommand{\dtdPhidq}{{\dPhi}_{\bq}}
\newcommand{\frdtdrho}{\frac{d\timp}{d\brho}}
\newcommand{\dtdrho}{{d\timp}/{d\brho}}
\newcommand{\dbq}{{\dot \bq}} 
\newcommand{\ddbq}{{\ddot \bq}} 
\newcommand{\dbv}{{\dot \bv}}
\newcommand{\dbZ}{{\dot \bZ}}
\newcommand{\dbz}{{\dot \bz}}
\newcommand{\dbV}{{\dot{\bV}}}
\newcommand{\qrhoplus}{{\bQ \afterimp}}
\newcommand{\qrhominus}{{\bQ \beforeimp}}
\newcommand{\dqrhoplus}{{\bV \afterimp}}
\newcommand{\dqrhominus}{{\bV \beforeimp}}
\newcommand{\Zplus}{{\bZ \afterimp}}
\newcommand{\Zminus}{{\bZ \beforeimp}}
\newcommand{\timpplus}{|_{\timp}^{+}}
\newcommand{\timpminus}{|_{\timp}^{-}}
\newcommand{\gplus}{\bg \timpplus}
\newcommand{\gminus}{\bg \timpminus}
\newcommand{\gplusarg}{\tilde{g} \big(\timp,\qtimp,\vplus, \brho\big)}
\newcommand{\gminusarg}{\tilde{g} \big(\timp,\qtimp,\vminus, \brho\big)}
\newcommand{\vplus}{{\bv \timpplus}}
\newcommand{\vminus}{\bv \timpminus}
\newcommand{\qplus}{{\bq \timpplus}}
\newcommand{\qminus}{\bq \timpminus}
\newcommand{\qtimp}{\bq|_{\timp}}
\newcommand{\vtimp}{\bv|_{\timp}}
\newcommand{\zplus}{{z \timpplus}}
\newcommand{\zminus}{z \timpminus}
\newcommand{\eps}{\varepsilon}
\newcommand{\zddytwo}{\bz(t) = \int_{t_0}^{t} \ddot y_2(\tau) \ {\rm d\tau}}
\newcommand{\dytwo}{\int_{t_0}^{t} \dot y_2(\tau) \ {\rm d\tau}}
\newcommand{\ddytwo}{\int_{t_0}^{t} \ddot y_2 \ {\rm d\tau}}
\journal{Nonlinear Analysis: Hybrid Systems}
\begin{document}
\thispagestyle{empty}
\setcounter{page}{0}

\makeatletter
\def\Year#1{%
  \def\yy@##1##2##3##4;{##3##4}%
  \expandafter\yy@#1;
}
\makeatother

\begin{Huge}
\begin{center}
Computational Science Laboratory Technical Report CSL-TR-\Year{\the\year}-{\tt 3} \\
\today
\end{center}
\end{Huge}
\vfil
\begin{huge}
\begin{center}
Sebastien Corner, Corina Sandu, Adrian Sandu
\end{center}
\end{huge}

\vfil
\begin{huge}
\begin{it}
\begin{center}
``{\tt Adjoint Sensitivity Analysis of Hybrid  Multibody Dynamical Systems}''
\end{center}
\end{it}
\end{huge}
\vfil

\begin{large}
\begin{center}
Computational Science Laboratory \\
Computer Science Department \\
Virginia Polytechnic Institute and State University \\
Blacksburg, VA 24060 \\
Phone: (540)-231-2193 \\
Fax: (540)-231-6075 \\ 
Email: \url{scorner@vt.edu} \\
Web: \url{http://csl.cs.vt.edu}
\end{center}
\end{large}

\vspace*{1cm}

\begin{tabular}{ccc}
\includegraphics[width=2.5in]{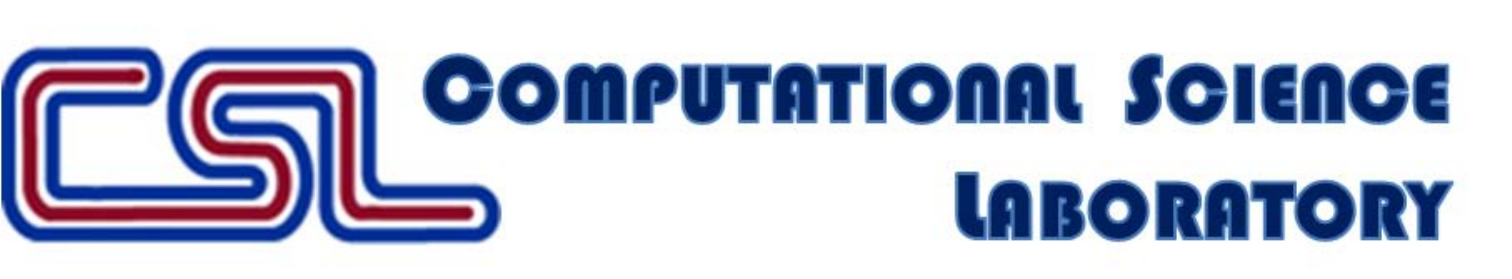}
&\hspace{2.5in}&
\includegraphics[width=2.5in]{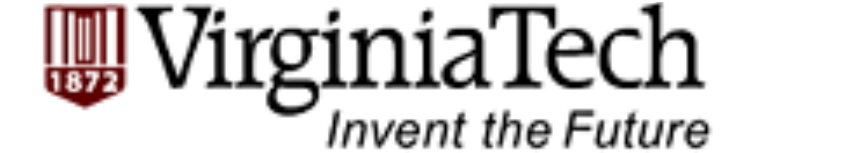} \\
{\bf\large \textit{Compute the Future}} &&\\
\end{tabular}

\newpage

\begin{frontmatter}

\title{Adjoint Sensitivity Analysis of Hybrid  Multibody Dynamical Systems} 


\author[rvt,focal]{Sebastien Corner}
\ead{scorner@vt.edu}
\author[focal]{Adrian Sandu}
\ead{sandu@cs.vt.edu}
\author[rvt]{Corina Sandu}
\ead{csandu@vt.edu}

\address[rvt]{Department of Mechanical Engineering,
Virginia Tech,
Blacksburg, VA 24061}
\address[focal]{Computational Science Laboratory,
Department of Computer Science,
Virginia Tech,
Blacksburg, VA 24061}

\begin{abstract}
Sensitivity analysis of multibody systems computes the derivatives of general cost functions that depend on the system solution with respect to parameters or initial conditions. 
This work develops adjoint sensitivity analysis for hybrid multibody dynamic systems. Hybrid systems are characterized by trajectories that are piecewise continuous in time, with finitely-many discontinuities being caused by events such as elastic/inelastic impacts or sudden changes in constraints. The corresponding direct and adjoint sensitivity variables are also discontinuous at the time of events. The framework discussed herein uses a jump sensitivity matrix to relate the jump conditions for the direct and adjoint sensitivities before and after the time event, and provides analytical jump equations for the adjoint variables. 
The theoretical framework for sensitivities for hybrid systems is validated on a five-bar mechanism with non-smooth contacts.
\end{abstract}
\begin{keyword}
Direct and adjoint sensitivity analysis \sep hybrid dynamics \sep jump conditions \sep constrained multibody systems
\end{keyword}
\end{frontmatter}


\doublespacing

\section{Introduction}

Hybrid dynamical systems are characterized piecewise-in-time smooth trajectories, with discontinuities appearing at a finite number of time moments as a consequence of external events. The discontinuities are characterized by a jump in the generalized velocity variables, e.g., caused by an impact or/and an abrupt change on the right hand side of the equation of motion. 

Sensitivity analysis aims to quantify the effect of small changes in the system parameters (inputs) on a general cost function (outputs) ~\cite{Sandu2015dynamic}. Sensitivity analysis is essential in solving computational engineering problems such as design and control optimization, implicit time integration methods, and deep learning. Finite difference methods that approximate the sensitivities by the difference between perturbed and nominal trajectories are often inaccurate ~\cite{chang2016design}. Two complementary approaches to sensitivity analysis are widely used, the direct and adjoint methods. While they provide the same derivatives, their approach and computational costs are different. Direct sensitivity propagates small perturbations forward through the system dynamics, while the adjoint method performs an inverse modeling that attempts to identify the origin of uncertainty in the model output ~\cite{sandu2003direct}. 

The sensitivity analysis with respect to system parameters and initial conditions for hybrid systems has been studied in the literature ~\cite{Barton1998,Barton1999, barton2002modeling, tolsma2002hidden, rozenvasser1967general, saccon2014sensitivity, hiskens2006sensitivity, hiskens2000trajectory, taringoo2010geometry, backer1966jump}. The jump conditions of the direct sensitivities for hybrid ODE systems were first presented by Becker \cite{backer1966jump} in 1966 and a year latter by Rozenvasser ~\cite{rozenvasser1967general}. Thirty years later, Gal\'{a}n {\it{el al.}} \cite{Barton1999} presented sufficient conditions for the existence and uniqueness of these jump equations.  Jump conditions involve the sensitivity of the time of event, and the jumps in the sensitivities of the state variables at the time of event. Within the same period, Hiskens applied this theory to power switching systems \cite{hiskens2000trajectory}. The jump conditions of the adjoint sensitivities for hybrid ODE systems with discontinuities in the right-hand side and with switching manifold parameters  were presented by Stewart \cite{stewart2010optimal} and Taringoo \cite{Taringoo2009}, respectively. Recently, Zhang et al. \cite{Hong2017} derived the jump conditions for adjoints of differential-algebraic systems and applied them to large-scale power systems with switching dynamics. 

This paper provides a unified mathematical framework for the direct and adjoint sensitivity analysis for multibody dynamic systems and general cost functions. The framework includes both unconstrained and constrained mechanical systems. The direct sensitivity analysis was developed in \cite{Corner2017}, where a new graphical proof of the jump conditions for direct sensitivity variables was given. Jump conditions for constrained mechanical systems with change of mechanism and dealing with impulsive forces at the time of event was also presented. 

This paper extends the mathematical framework to perform adjoint sensitivity analysis for mechanical systems with non-smooth trajectories. The approach taken here is as follows. An event detection mechanism (e.g., embedded in the numerical integration method) finds the time of the next event. At this time moment the trajectories of the generalized position variables are continuous but not differentiable, while the trajectories of the generalized velocities are discontinuous due to either impulsive external forces or to abrupt changes of the right-hand side.  The jump conditions that map the direct sensitives from right before the event to right after the event can be formalized with the help of a jump sensitivity matrix.  The jumps in the adjoint sensitivity variables are obtained via the transpose of this jump sensitivity matrix.

The paper is organized as follows. A review of the direct and adjoint sensitivity analyses for smooth dynamical systems with extended general cost functions is given in Section \ref{sec:sensitivity_unconstrained_systems}. The framework for direct and adjoint sensitivity analyses for hybrid  dynamical systems is discussed in Section \ref{sec:sensitivity_constrained_systems}. The methodology is applied to compute sensitivities of a five-bar mechanism with non-smooth contacts in Section \ref{sec:numerics}. Conclusions are drawn in Section \ref{sec:conclusions}.

\section{Sensitivity analysis for unconstrained mechanical systems and extended cost functions}
\label{sec:sensitivity_unconstrained_systems}

This section provides a summary of a previously method developed by the authors to implement direct sensitivity analysis for dynamical systems governed by smooth second order systems of ordinary differential equations (2nd order ODEs). More details of this method can be found here (cite the paper). This method is extended to multiple cost functions that contain argument function.
%
\subsection{Smooth ODE system dynamics and extended cost functions}
%
We consider an unconstrained mechanical system governed by the second order ordinary differential equation (ODE):
\begin{equation}
\label{eq:EOM-ODE}
\begin{split}
{\Mass}\left(t,\bq,\brho\right) \cdot \ddbq &={\Force} \left(t,\bq,\dbq,\brho\right),
\quad t_0 \leq t \leq  t_F, \quad \bq(t_0)=\bq_0(\brho),\quad\dbq(t_0)=\dbq_0(\brho), \\
\Leftrightarrow ~~~\ddbq &={\Mass}^{-1} \left(t,\bq,\brho\right)  \cdot {\Force}\left(t,\bq,\dbq,\brho\right)
=: \feom \left(t,\bq,\dbq,\brho\right),
\end{split}
\end{equation}
where $\bq \in \Re^n$ are the generalized positions, $\bv := \dbq \in \Re^n$ the generalized velocities, and $\brho \in \Re^p$ the time independent parameters of the system. The state trajectories depend implicitly on time and on the parameters, $\bq=\bq(t, \brho)$ and $\bv=\dbq(t, \brho)$. We consider a general system output of the form:
\begin{eqnarray}
\label{eq:CostFunction}
\psi(\rho) =  \int_{t_0}^{t_F} {\tilde{g}_j\bigl(\tau,\bq,\bv,\brho\bigr) \  d\tau} + 
\tilde{w}\bigl(t_F,\bq_{t_F},\bv_{t_F},\brho\bigr) , \quad \bq_{t_F}:=\bq(t_F,\brho),\quad\bv_{t_F}:=\bv(t_F,\brho).
\end{eqnarray}
The function $\tilde{\bg} : \Re^{1+2n+p} \to \Re^{\nc}$ is a vector of `trajectory cost functions', and $\tilde{w} : \Re^{1+2n+p} \to \Re^{\nc}$ is a vector of `terminal cost functions', and the system output $\psi \in \Re^{\nc}$ is a vector of $\nc$ `outputs', i.e., scalar cost functions. 
Both the trajectory and terminal cost functions can include accelerations via $\dbv$. Accelerations are not independent variables and can be resolved in terms of positions and velocities $\dbv:=\feom \left(t,\bq,\dbq,\brho\right)\in \mathds{R}^{n}$. The cost functions can also include arguments  
$\tilde{\bu}\bigl(\,t,\,\bq,\,\bv,\,\brho\,\bigr)=\bu\left(\,t,\,\bq,\,\bv,\,\dbv,\,\brho \right)$ that depend on the solution and on the acceleration. Our notation encompasses these cases by defining:
\begin{equation}
\label{eq:CostFunction-explained}
\begin{split}
\tilde{g}\bigl(\,t,\,\bq,\,\bv,\,\brho\,\bigr)&=g\bigl(\,t,\,\bq,\,\bv,\,\dbv,\,\brho, \, \bu (\,t,\,\bq,\,\bv, \,\dbv ,\brho) \bigr), \\
\tilde{w}\bigl(t_F,\bq_{t_F},\bv_{t_F},\brho\bigr)&={w}\left(t_F,\bq_{t_F},\bv_{t_F},\dbv_{t_F},\brho,\bu(t_F,\bq_{t_F},\bv_{t_F},\dbv_{t_F},\brho) \right).
\end{split}
\end{equation}
All functions are considered to be smooth.
\begin{definition}[Sensitivity analysis problem]
\label{def:sensitivity-problem}
The sensitivity analysis problem is to compute the derivatives of the model outputs \eqref{eq:CostFunction} with respect to model parameters:
\begin{equation}
\label{eqn:sensitivity-problem}
\frac{d\, \psi}{d\, \brho} := \displaystyle \begin{bmatrix} \displaystyle \frac{d\,\psi}{d\, \brho_1} \cdots \frac{d\,\psi}{d\, \brho_p} \end{bmatrix} \in \Re^{\nc \times p}.
\end{equation}
\end{definition}

\begin{definition}[The canonical ODE system] 
\label{def:canonical-ode}
To simplify the representation of the system we define the vector of `quadrature' variables $z \in \mathds{R}^{\nc}$ as follows:
\begin{equation} 
\label{eqn:quadrature-variable}
\begin{split}
\bz(t,\brho) &:=  \int_{t_0}^{t} { \tilde{g}\bigl(\tau,\bq,\bv,\brho\bigr) \,  d\tau} \quad \Leftrightarrow \quad \\
\dbz(t,\brho) &= \tilde{\bg}\bigl(t,\bq,\bv,\brho\bigr),  \quad t_0 \leq t \leq  t_F, \quad z(t_0,\brho)=0,
\end{split}
\end{equation}
 which leads the vector of cost function \eqref{eq:CostFunction}  at final time to become:
\begin{equation}
\label{eqn:quadrature-cost-function}
\psi =\bz(t_F) + \tilde{w}\bigl(t_F,\bq_{t_F},\bv_{t_F},\brho\bigr).
\end{equation}
Next, we add dummy evolution equations $\brho'=0$ for the time independent parameters.

Finally, we append the parameters and the quadrature variables to system state to obtain the following extended `canonical' state vector:
\[
\bx(t) := \begin{bmatrix} \bq(t)^T \quad \bv(t)^T \quad\ \brho(t)^T \quad \bz(t)^T \end{bmatrix}^T \in \mathds{R}^{(2  n + p+ \nc) \times 1} 
\]
together with the `canonical ODE system' that describes its evolution:
\begin{equation}
\label{eq:canonical-ode-system}
\dot{\bx} 
= \begin{bmatrix} \bv \\ \feom\bigl(t,\bq,\bv,\brho \bigr)  \\ \Zero_{p \times 1} \\\tilde{g}\fin \end{bmatrix} 
:= F(t,\bx) \in \mathds{R}^{2n+p+\nc}, \quad t_0 \le t \le t_F, \quad
\bx(t_0) := \begin{bmatrix} \bq_0(\brho) \\ \bv_0(\brho) \\ \brho \\ \Zero_{\nc \times 1} \end{bmatrix}.
\end{equation}
\end{definition}
%
\subsection{Direct sensitivity analysis for smooth ODE systems and extended cost function}
\label{sec:direct-sensitivity-continuous}
%
 Define the `position sensitivity' matrix $Q(t,\brho)$, the `velocity sensitivity' matrix  $V(t,\brho)$, the `quadrature sensitivity' matrix $Z(t,\brho)$, and an identity matrix $\Gamma$ as the formal sensitivity of the parameters, as:
\begin{subequations}
\label{eq:sensitivity-matrices}
\begin{eqnarray}
\label{eq:sensitivity-matrix-Q}
\bQ_i(t,\brho) &:= \displaystyle \frac{d\, \bq(t,\brho)}{d\, \brho_i} \in \mathds{R}^{n}, ~~ i=1,\dots,p; \quad
\bQ(t,\brho) &:= \begin{bmatrix} \bQ_1(t,\brho) \cdots \bQ_p(t,\brho) \end{bmatrix} \in \mathds{R}^{n \times p}, \\
\label{eq:sensitivity-matrix-V}
\bV_i(t,\brho) &:= \displaystyle\frac{d\, \bv(t,\brho)}{d\, \brho_i} \in \mathds{R}^{n}, ~~ i=1,\dots,p; \quad
\bV(t,\brho) &:= \begin{bmatrix} \bV_1(t,\brho) \cdots \bV_p(t,\brho) \end{bmatrix} \in \mathds{R}^{n \times p}, 
\\
\Gamma_i(t,\brho) &:= \displaystyle\frac{d\, \brho(t,\brho)}{d\, \brho_i} \in \mathds{R}^{p}, ~~ i=1,\dots,p; \quad
\Gamma(t,\brho) &:= \begin{bmatrix} \Gamma_1 \quad \quad \cdots \quad \quad  \Gamma_p \end{bmatrix}=\bI_{p \times p}, 
\\
Z_i(t,\brho) &:= \displaystyle\frac{\partial z(t,\brho)}{\partial \brho_i} \in \mathds{R}^{\nc}, ~~ i=1,\dots,p; \quad
Z(t,\brho) &:= \begin{bmatrix} Z_1(t,\brho) \cdots Z_p(t,\brho) \end{bmatrix} \in \mathds{R}^{\nc \times p}.
\end{eqnarray}
\end{subequations}
The direct sensitivity for ODE systems, referred to as the Tangent Linear Model (TLM), computes the sensitivity matrix
$\bX = \left[   
\bQ^{\rm T}, \,
\bV^{\rm T}, \,
\Gamma, \,
\bZ^{\rm T}
\right] ^{\rm T}  \in \mathds{R}^{(2n+p+\nc)\times p}$.
obtained by differentiating the canonical ODE system \eqref{eq:canonical-ode-system} with respect to the parameters:
\begin{eqnarray}
\label{eq:TLM-ODE-compact}
\dot{\bX}=\begin{bmatrix}
\dot{\bQ} \\
\dot{\bV} \\ 
\dot{\Gamma} \\
\dbZ
\end{bmatrix} 
=
\begin{bmatrix}
\bV \\
\feomq \bQ +
\feomv \bV +
\feomrho  \\
\Zero_{p \times p} \\
\tilde{\bg}_q \, \bQ +
\tilde{\bg}_v \, \bV +
\tilde{\bg}_{\brho} \\
\end{bmatrix}, \quad t_0 \le t \le t_F, \quad
\bX(t_0) := \begin{bmatrix} \displaystyle \frac{d \bq_0(\brho)}{d \brho} \\ \displaystyle \frac{d \bv_0(\brho)}{d \brho} \\ \bI_{p \times p} \\ \Zero_{\nc \times p} \end{bmatrix} \in \Re^{(2n+p+\nc) \times p}.
\end{eqnarray}
The direct sensitivity for ODE systems needs to be solved forward in time.
The expressions $\feomq$, $\feomv$, and $\feomrhoi$ denote the partial derivatives of $\feom$ with respect to the subscripted variables. The detailed calculation of these expressions and the remaining partial derivatives is explained in Appendix \ref{appendix:derivatives-calculation}. Once the sensitivities \eqref{eq:TLM-ODE-compact} have been calculated, the sensitivities of the cost functions \eqref{eqn:sensitivity-problem} with respect to parameters are computed as follows:
\begin{equation}
\label{eq:sensitivity cost function t_F}
\frac{d\,\psi}{d\, \brho} = \bZ(t_F) + 
\left[ \tilde{w}_\bq \cdot Q + \tilde{w}_\bv \cdot V  + \tilde{w}_{\brho} \right]_{t_F} \in \mathds{R}^{\nc \times p}. 
\end{equation}
We note that the TLM system \eqref{eq:TLM-ODE-compact} can be written in matrix form as follows:
\begin{equation}
\label{eq:TLM-ODE-matrix}
\begin{bmatrix}
 \dot{\bQ} \\\dot{\bV} \\ \dot{\Gamma} \\ \dbZ
\end{bmatrix} 
=
\begin{bmatrix}
\Zero_{n \times n} &	\textrm{I}_{n \times n} & \Zero_{n \times p} & \Zero_{n \times \nc} \\
	\feomq & \feomv &	\feomrho & \Zero_{n \times \nc} \\
	\Zero_{p \times n} & \Zero_{p \times n} & \Zero_{p \times p} & \Zero_{p \times \nc} \\
	 \tilde{\bg}_q & \tilde{\bg}_v & \tilde{\bg}_{\brho} & \Zero_{\nc \times \nc} \\
\end{bmatrix} \cdot
\begin{bmatrix}
{\bQ}\\{\bV} \\ \Gamma \\ \bZ
\end{bmatrix} , \quad t_0 \leq t \leq t_F.
\end{equation}
%

\subsection{Adjoint sensitivity analysis for smooth ODE systems and extended cost function}
In this section we provide the  system of equations that governs the adjoint sensitivity analysis for smooth ODE systems.

\begin{definition}[Adjoint sensitivity analysis]
Apply the chain rule differentiation to the total sensitivity of the cost function \eqref{eqn:sensitivity-problem}:
\begin{equation} 
\label{eqn:dpsi-chain-rule}
\frac{d\,\psi}{d\, \brho} =  \frac{d\,\psi}{d\, x(t,  \brho)} \cdot \frac{d\,x(t,  \brho)}{d\, \brho}=\lambda^{\rm T}(t,  \brho) \cdot X(t,  \brho), \quad 
\end{equation}
where $\lambda= (d\,\psi/d\, x)^T = \left[{\lambda^{\bQ}}^{\rm T},  {\lambda^{\bV}}^{\rm T},  {\lambda^{\Gamma}}^{\rm T}, {\lambda^{\bZ}}^{\rm T} \right]^{\rm T}$ is defined as:
\begin{subequations}
\label{eqn:adjoint-definition}
\begin{eqnarray}
\lambda^\bQ_j(t,  \brho)&:= \left(\frac{d\,\psi_j}{d\, \bq(t,  \brho)}\right)^T \in \mathds{R}^{ n \times 1}, ~~ j=1,\dots, \nc; \quad
\lambda^{\bQ}(t,\brho) &:= \begin{bmatrix} \lambda^{\bQ}_1(t,\brho)  \cdots \lambda^{\bQ}_{\nc}(t,\brho) \end{bmatrix} \in \mathds{R}^{n \times \nc}, \\
 \lambda^\bV_j(t,  \brho)&:= \left(\frac{d\,\psi_j}{d\, \bv(t,  \brho)}\right)^T \in \mathds{R}^{ n \times 1}, ~~ j=1,\dots, \nc; \quad
 \lambda^{\bV}(t,\brho) &:= \begin{bmatrix} \lambda^{\bV}_1(t,\brho)  \cdots \lambda^{\bV}_{\nc}(t,\brho) \end{bmatrix} \in \mathds{R}^{n \times \nc}, \\
 \lambda^\Gamma_j(t, \brho) &:= \quad \left(\frac{d\,\psi_j}{d\,\brho}\right)^T \in \mathds{R}^{ p \times 1}, ~~ j=1,\dots, \nc; \quad
 \lambda^{\Gamma}(t,\brho) &:= \begin{bmatrix} \lambda^{\Gamma}_1(t,\brho)  \cdots \lambda^{\Gamma}_{\nc}(t,\brho) \end{bmatrix} \in \mathds{R}^{p \times \nc} ,\\
  \lambda^\bZ_j(t,  \brho)&:= \left(\frac{d\,\psi_j}{d\, \bz(t,  \brho)}\right)^T \in \mathds{R}^{ \nc \times 1},  ~~ j=1,\dots, \nc; \quad
 \lambda^{\bZ}(t,\brho) &:= \begin{bmatrix} \lambda^{\bZ}_1  \quad \quad \cdots \quad \quad \lambda^{\bZ}_{\nc} \end{bmatrix} = \bI_{\nc \times \nc}.
\end{eqnarray}
\end{subequations}
Note that, from \eqref{eqn:quadrature-variable}--\eqref{eqn:quadrature-cost-function}
\[
\psi =\bz(t,\brho) + \int_{\tau}^{t_F} { \tilde{g}\bigl(\tau,\bq,\bv,\brho\bigr) \,  d\tau} + \tilde{w}\bigl(t_F,\bq_{t_F}\bigr),\]
which leads to the relation $d \psi/d \bz(t,\brho) = \bI_{\nc \times \nc}$ for any time $t$.

From \eqref{eqn:dpsi-chain-rule}  we have that for any time $t$:
\begin{equation} 
\label{eqn:dpsi-chain-rule-expanded}
\frac{d\,\psi}{d\, \brho} =  
{\lambda^{\bQ}(t,  \brho)}^{\rm T} \cdot  \bQ(t,  \brho) +  {\lambda^{\bV}(t,  \brho)}^{\rm T} \cdot  \bV(t,  \brho) +  {\lambda^{\Gamma}(t,  \brho)}^{\rm T} + {\lambda^{\bZ}(t,  \brho)}^{\rm T} \cdot  \bZ(t,  \brho).
\end{equation}
Evaluating \eqref{eqn:dpsi-chain-rule-expanded} at $t=t_F$ leads to the direct sensitivity approach:
\begin{equation}
\label{eqn:sensitivity-psi-direct}
\frac{d\,\psi}{d\, \brho} =  
{\lambda^{\bQ}(t_F,  \brho)}^{\rm T} \cdot  \bQ(t_F,  \brho) +  {\lambda^{\bV}(t_F,  \brho)}^{\rm T} \cdot  \bV(t_F,  \brho) +  {\lambda^{\Gamma}(t_F,  \brho)}^{\rm T} \cdot {\Gamma}(t_F,  \brho)  + {\lambda^{\bZ}(t_F,  \brho)}^{\rm T} \cdot  \bZ(t_F,  \brho).
\end{equation}
By comparing this equation with \eqref{eq:sensitivity cost function t_F} one obtains the values of the adjoint variables at the final time $t_F$:
\begin{equation} 
\label{eqn:adjoint-final-conditions}
{\lambda^{\bQ}(t_F,  \brho)} = \tilde{w}_\bq^{\rm T} \big|_{t_F}, \quad
{\lambda^{\bV}(t_F,  \brho)} = \tilde{w}_\bv^{\rm T} \big|_{t_F}, \quad
{\lambda^{\Gamma}(t_F,  \brho)} = \tilde{w}_{\brho}^{\rm T} \big|_{t_F}, \quad
{\lambda^{\bZ}(t_F,  \brho)} = \bI_{\nc \times \nc}.
\end{equation}
The equation \eqref{eqn:dpsi-chain-rule-expanded} evaluated at $t=t_F$ leads to the direct sensitivity approach: 
\begin{equation}
\label{eqn:sensitivity-psi-direct at tF}
\begin{split}
\frac{d\,\psi}{d\, \brho} &=  
\tilde{w}_\bq \big|_{t_F} \cdot  \bQ(t_F,  \brho) \, + \,  \tilde{w}_\bv \big|_{t_F} \cdot  \bV(t_F,  \brho) \, + \, \tilde{w}_{\brho}\big|_{t_F}  \cdot \bI_{p \times p} \, + \,
\bI_{\nc \times \nc}\cdot  \bZ(t_F,  \brho) \\
&=  \displaystyle 
\tilde{w}_\bq \big|_{t_F} \cdot  \bQ(t_F,  \brho) \, + \,  \tilde{w}_\bv \big|_{t_F} \cdot  \bV(t_F,  \brho) \, + \, \tilde{w}_{\brho}\big|_{t_F}   \, + \,
  \bZ(t_F,  \brho).
\end{split}
\end{equation}
Evaluating \eqref{eqn:dpsi-chain-rule-expanded} at $t=t_0$ leads to the adjoint sensitivity approach:
\begin{equation}
\label{eqn:sensitivity-psi-adjoint}
\begin{split}
\frac{d\,\psi}{d\, \brho} &=  
{\lambda^{\bQ}(t_0,  \brho)}^{\rm T} \cdot  \bQ(t_0,  \brho) +  
{\lambda^{\bV}(t_0,  \brho)}^{\rm T} \cdot  \bV(t_0,  \brho) \, + \,   
{\lambda^{\Gamma}(t_0,  \brho)}^{\rm T} \cdot \bI_{p \times p}  + 
{\lambda^{\bZ}(t_0,  \brho)}^{\rm T} \cdot  \bZ(t_0,  \brho) \\
&=  \displaystyle 
{\lambda^{\bQ}(t_0,  \brho)}^{\rm T} \cdot  \frac{d \bq_0(\brho)}{d \brho} \, + \,   {\lambda^{\bV}(t_0,  \brho)}^{\rm T} \cdot  \frac{d \bv_0(\brho)}{d \brho} \, + \,   {\lambda^{\Gamma}(t_0,  \brho)}^{\rm T} \cdot \bI_{p \times p} +
\bI_{\nc \times \nc}  \cdot  0_{n_v \times \nc} \\
&=  \displaystyle 
{\lambda^{\bQ}(t_0,  \brho)}^{\rm T} \cdot  \frac{d \bq_0(\brho)}{d \brho} \, + \,  {\lambda^{\bV}(t_0,  \brho)}^{\rm T} \cdot  \frac{d \bv_0(\brho)}{d \brho} \, + \, {\lambda^{\Gamma}(t_0,  \brho)}^{\rm T}  .
\end{split}
\end{equation}
Note that the adjoint variables are initialized at $t=t_F$. However, their values at $t=t_0$ are the ones needed for computing the desired sensitivities.
\end{definition}
\begin{definition}[The canonical adjoint sensitivity for ODE systems]
The evolution of adjoint variables for ODE systems is governed by the following continuous adjoint model: 
\begin{equation}
\label{eq:canonical-adjoint-ODE}
{\begin{bmatrix}
\dot{\lambda}^\bQ \\ 	\dot{\lambda}^\bV \\ \dot{\lambda}^{\Gamma} \\ \dot{\lambda}^{\bZ}
\end{bmatrix}}
= -
\begin{bmatrix}
\Zero_{n \times n} &	\feomq^\mathrm{T} & \Zero_{n \times p} & \tilde{\bg}_{\bq}^\mathrm{T} \\
\textrm{I}_{n \times n} &	\feomv^\mathrm{T} & \Zero_{n \times p} & \tilde{\bg}_v^\mathrm{T} \\
\Zero_{p \times n} &	\feomrho^\mathrm{T} & \Zero_{p \times p} & \tilde{\bg}_{\rho}^\mathrm{T} \\
\Zero_{\nc \times n}&	 \Zero_{\nc \times n} &  \Zero_{\nc \times p} & \Zero_{\nc \times \nc}
\end{bmatrix} 
\cdot 
{\begin{bmatrix}
\lambda^\bQ \\ 	\lambda^\bV \\ {\lambda}^{\Gamma} \\ {\lambda}^{\bZ}
\end{bmatrix}},	\quad t_F \geq t \geq t_0, \quad
\lambda(t_F, \rho) :=
\begin{bmatrix}
\tilde{w}_{q}^\mathrm{T}(t_F, \rho) \\
\tilde{w}_{v}^\mathrm{T}(t_F, \rho) \\
\tilde{w}_{{\rho}}^\mathrm{T}(t_F, \rho) \\ 
\textrm{I}_{\nc \times \nc}
\end{bmatrix} \in \mathds{R}^{ (2n+p+\nc) \times \nc }.
\end{equation}
The  adjoint sensitivities	\eqref{eq:canonical-adjoint-ODE} are solved backward in time.
\end{definition}
\subsection{Hybrid ODE system dynamics}
In this study, we consider hybrid ODE systems characterized by piecewise-in-time smooth dynamics described by \eqref{eq:EOM-ODE}, and that exhibit discontinuous dynamic behavior (jump or non-smoothness) in the generalized velocity state vector at a finite number of time moments (no zeno phenomena \cite{Pace2017}).  Each such moment is corresponds to an event  triggered by the event equation:
\begin{equation}
\label{eq:event_Function}
r \big( \qtimp \big) = 0,
\end{equation} 
where $\timp$  is the `time of event' and $r : \mathds{R}^{n} \to \mathds{R}$ is a smooth `event function'.  Note that grazing phenomena are not considered the in this study. The following quantities are used to characterize an event:
\begin{itemize}
	\item The value of a variable right before the event is denoted by
	$
	\boldsymbol{x}\beforeimp := \lim_{\varepsilon>0,~\varepsilon\to 0}\,\boldsymbol{x}(\timp-\varepsilon),
	$
	and its value right after the event by
	$
	\boldsymbol{x}\afterimp := \lim_{\varepsilon>0,~\varepsilon\to 0}\,\boldsymbol{x}(\timp+\varepsilon).
	$
	The limits exist since the evolution of the system is smooth in time both before and after the event.
	\item 
	The generalized position state variables remain the same after the event as before it, $\quad	\qplus = \qminus = \bq\atimp.$ This is a consequence of the event changing the energy of the system by a finite amount.
	\item Also due to the finite energy change during the event, the quadrature variable is continuous in time, $\quad \zplus= \zminus = z\atimp.$
	\item 
	An event that applies a finite energy impulse force to the system can abruptly change the generalized velocity state vector $\dbq$, from its value $\vminus$ right before the event to a new value $\vplus $ right after the event. The change in velocity is characterized by the `jump function': 
	\begin{equation}
	\label{eq:jump-in-velocity}
	\vplus=h \Big({\timp},\, \bq\atimp,\,\vminus,\,{\brho} \Big)  \quad \Leftrightarrow \quad  \dbq \afterimp=h \Big({\timp},\, \bq\atimp,\,\dbq  \beforeimp,\,{\brho} \Big) .
	\end{equation}
	\item 
	An event where the system undergoes a sudden change of the equation of motions \eqref{eq:EOM-ODE} at $\timp$ is characterized by the equations:
	\begin{equation}
	\label{eq:jump-in-eom}
	\ddbq \beforeimp = \feom^- \big(\timp,\qtimp,\vtimp, \brho\big) =:  \feom^-\atimp
	\quad \stackrel{\rm event}{\longrightarrow} \quad
	\ddbq \timpplus = \feom^+ \big(\timp,\qtimp,\vtimp, \brho\big)=:  \feom^+\atimp.
	\end{equation}
\end{itemize}

\begin{remark}[Multiple events]
In many cases, the change can be triggered by one of multiple events. Each individual event is described by the event function $r_\ell : \mathds{R}^n \to \mathds{R}$, $\ell=1,\dots,e$. The detection of the next event \eqref{eq:event_Function}, which can be one of the possible $e$ options, is described by $\Pi_{i=1}^e r_i \big(  \qtimp\big)= 0$, and if event $\ell$ takes place, then $r_\ell = 0$ and the corresponding jump in velocity \eqref{eq:jump-in-velocity} is $\vplus = h_\ell \Big(\qtimp,\,\vminus\Big)$, or the corresponding change in the equations of motion \eqref{eq:jump-in-eom} is $\ddbq \timpplus =:  \feom_\ell^+\atimp$.
\end{remark}

\subsection{Direct sensitivity analysis for hybrid ODE systems}
%
Let $\bQ\afterimp$ and $\bQ\beforeimp \in \mathds{R}^{n{\times}p}$  be the sensitivities of the generalized position state matrix after and before the event, respectively.
Let $\dqrhoplus,\dqrhominus  \in \mathds{R}^{n{\times}p}$ be the sensitivities of the generalized velocity state matrix after and before the event, respectively.
Let $\Zplus$ and $\Zminus$, with  $\bZ \in \mathds{R}^{p}$, be the sensitivities of the quadrature variable $\bz(t)$   after and before the event, respectively.
It is shown in \cite{Corner2017} that, at the time of the event, we have:
\begin{subequations}
\begin{itemize}
\label{eqn:forward-sensitivity-jumps}
\item
 The sensitivity of the time of event with respect to the system parameters is:
\begin{eqnarray}
\label{eq:sensitivity-time-of-event}
\frdtdrho=
- \, \dfrac{\displaystyle \frac{d r}{d\bq}\left(\qtimp\right)  \cdot \bQ\beforeimp}{\displaystyle \frac{d r}{d\bq}\left(\qtimp\right)  \cdot  \vminus }   
\in \mathds{R}^{1 \times p}.
\end{eqnarray}
where $dr/d\bq \in \mathds{R}^{1 \times n }$ is the  Jacobian of the event function.

\item
The jump equation of the sensitivities of the generalized position state vector is:
\begin{eqnarray}
\label{eq:jump-position-sensitivity}
\bQ\afterimp = \bQ\beforeimp - \bigg( \vplus - \vminus \bigg) \cdot \frdtdrho.
\end{eqnarray}

\item
The jump equation of the sensitivities of the generalized velocity state vector is:
\begin{eqnarray}
\label{eq:jump-in-velocity-sensitivity}
\dqrhoplus &=& h_\bq\beforeimp \cdot \qrhominus +h_\bv\beforeimp \cdot  \dqrhominus 
+\left( h_\bq\beforeimp  \cdot \vminus -\ddbq \timpplus + h_\bv\beforeimp  \cdot \ddbq \beforeimp  {+ h_t}\beforeimp \right) \cdot \frdtdrho {+ h_\brho\beforeimp},
\end{eqnarray}
where the Jacobians of the jump function are:
\begin{eqnarray}
h_t\beforeimp &:= \frac{\partial h}{\partial t}\big(\timp, \qtimp, \vminus ,\brho\big) \in \Re^{f \times 1}, \qquad h_\bq\beforeimp &:= \frac{\partial h}{\partial\bq}\big(\timp, \qtimp, \vminus ,\brho\big)\in \Re^{f \times n}, \nonumber \\
h_\bv\beforeimp &:= \frac{\partial h}{\partial\bv}\big(\timp,\qtimp, \vminus,\brho \big)\in \Re^{f \times f}, \qquad h_\brho\beforeimp &:= \frac{\partial h}{\partial\brho}\big(\timp,\qtimp, \vminus,\brho \big)\in \Re^{f \times p}.
\end{eqnarray}
\item 
The sensitivity of the cost function changes during the event is :
\begin{eqnarray}
\label{eq:jump-in-quadrature-sensitivity}
\Zplus= \Zminus - \Big( \gplus - \gminus \Big) \cdot \frdtdrho.
\end{eqnarray}
where 
\begin{equation}
\gplus := \gplusarg,
\quad
\gminus := \gminusarg,
\end{equation}
is the running cost function evaluated right after and right before the event, respectively. 
\end{itemize}
\end{subequations}
\begin{definition}[The generalized jump sensitivity matrix]
The direct sensitivity jump equations \eqref{eqn:forward-sensitivity-jumps} can be written compactly in matrix form as ${X}\afterimp= S \cdot{X}\beforeimp$, where $S$ is the generalized sensitivity jump matrix:
\begin{subequations}
\label{eqn:forward-sensitivity-jump-matrix}
\begin{eqnarray}
\label{eqn:Jump_direct_sensitivity}
\begin{bmatrix}
 \bQ\afterimp \\
\bV\afterimp \\
\Gamma\afterimp\\  
\bZ\afterimp 
\end{bmatrix} 
=
\underbrace{
\begin{bmatrix}
\left(\bQ\afterimp\right)_{\bQ\beforeimp}  & \Zero_{n \times n} &\Zero_{n \times p} & \Zero_{n \times \nc}\\
\left(\bV\afterimp\right)_{\bQ\beforeimp} &  h_\bv\beforeimp   &  h_{\brho}\beforeimp & \Zero_{n \times \nc}\\
\Zero_{p \times n} & \Zero_{p \times n}  & \bI_{p \times p} & \Zero_{p \times \nc}\\
\left(\bZ\afterimp\right)_{\bQ\beforeimp} & \Zero_{\nc \times n} & \Zero_{\nc \times p} &
\bI_{\nc \times \nc} \\
\end{bmatrix} 
}_{\mathsf{S}_\textnormal{eve}} \cdot \begin{bmatrix}
\bQ\beforeimp \\
\bV\beforeimp \\ 
\Gamma\beforeimp\\  
\bZ\beforeimp 
\end{bmatrix}. 
\end{eqnarray}
From \eqref{eq:sensitivity-time-of-event} we have that: 
\begin{eqnarray}
\left(\frdtdrho \right)_{\bQ\beforeimp}=
- \, \dfrac{\displaystyle \frac{d r}{d\bq}\left(\qtimp\right) }{\displaystyle \frac{d r}{d\bq}\left(\qtimp\right)  \cdot  \vminus }   
\in \mathds{R}^{1 \times n}.
\end{eqnarray}
The Jacobians $\left(\bQ\afterimp\right)_{\bQ\beforeimp}$ and $\left(\bV\afterimp\right)_{\bQ\beforeimp}$ are:
\begin{eqnarray}
\left(\bQ\afterimp\right)_{\bQ\beforeimp} &=&  \rm I - \bigg( \vplus - \vminus \bigg) \cdot \left(\frdtdrho \right)_{\bQ\beforeimp}  \\
\nonumber
&=&  \rm I + \bigg( \vplus - \vminus \bigg) \cdot\dfrac{\displaystyle \frac{d r}{d\bq}\left(\qtimp\right) }{\displaystyle \frac{d r}{d\bq}\left(\qtimp\right)  \cdot  \vminus }\in \mathds{R}^{n \times n}, 
\end{eqnarray}
and
\begin{eqnarray}
\left(\bV\afterimp\right)_{\bQ\beforeimp} &=& 
h_\bq\beforeimp   +\left( h_\bq\beforeimp  \cdot \vminus -\ddbq \timpplus + h_\bv\beforeimp  \cdot \ddbq \beforeimp  {+ h_t}\beforeimp \right) \cdot \left(\frdtdrho \right)_{\bQ\beforeimp}  \\
&=& h_\bq\beforeimp   - \left( h_\bq\beforeimp  \cdot \vminus -\ddbq \timpplus + h_\bv\beforeimp  \cdot \ddbq \beforeimp  {+ h_t}\beforeimp \right) \cdot \dfrac{\displaystyle \frac{d r}{d\bq}\left(\qtimp\right) }{\displaystyle \frac{d r}{d\bq}\left(\qtimp\right)  \cdot  \vminus } \in \mathds{R}^{n \times n},
\end{eqnarray}
respectively.
The Jacobian $\left(\bZ\afterimp\right)_{\bQ\beforeimp}$ is:
\begin{eqnarray}
\left(\bZ\afterimp\right)_{\bQ\beforeimp} = -  \Big( \gplus - \gminus \Big) \cdot \left(\frdtdrho \right)_{\bQ\beforeimp} 
= \Big( \gplus - \gminus \Big) \cdot\dfrac{\displaystyle \frac{d r}{d\bq}\left(\qtimp\right) }{\displaystyle \frac{d r}{d\bq}\left(\qtimp\right)  \cdot  \vminus } \in \mathds{R}^{\nc \times n}.
\end{eqnarray}
\end{subequations}
\end{definition}

\subsection{Adjoint sensitivity analysis for hybrid ODE unconstrained dynamical systems}
\begin{theorem}[Adjoint sensitivity jump matrix]
Let $\lambda^\bQ\beforeimp$ $\in \mathds{R}^{n \times \nc}$, $ \lambda^\bV\beforeimp \in \mathds{R}^{n \times \nc},  
{\lambda}^{\Gamma}\beforeimp  \in \mathds{R}^{p \times \nc} $ and $ {\lambda}^{\bZ}\beforeimp  \in \mathds{R}^{\nc \times \nc}$	
be the adjoint sensitivities before the time of event respectively, and 
$\lambda\beforeimp=\left[ {\lambda^\bQ\beforeimp}^{\rm T} \quad  {\lambda^\bV\beforeimp}^{\rm T} \quad {\lambda^{\Gamma}\beforeimp}^{\rm T} \quad  {\lambda^\bZ\beforeimp}^{\rm T} \right]^{\rm T}$.
Let $\lambda^\bQ\afterimp$ $\in \mathds{R}^{n \times \nc}$, $ \lambda^\bV\afterimp \in \mathds{R}^{n \times \nc},  
{\lambda}^{\Gamma}\afterimp  \in \mathds{R}^{p \times \nc} $ and $ {\lambda}^{\bZ}\afterimp  \in \mathds{R}^{\nc \times \nc}$
be the adjoint sensitivities after the time of event respectively, and
$\lambda\afterimp=\left[ {\lambda^\bQ\afterimp}^{\rm T} \quad  {\lambda^\bV\afterimp}^{\rm T} \quad {\lambda^{\Gamma}\afterimp}^{\rm T} \quad  {\lambda^\bZ\afterimp}^{\rm T} \right]^{\rm T}$.

The adjoint sensitivity jump equations at the time of an event are:
\begin{equation}
\label{eqn:adjoint-sensitivity-jump-matrix}
\lambda\beforeimp = \mathsf{S}_\textnormal{eve}^\textrm{T}  \cdot \lambda\afterimp \in \mathds{R}^{(2 \times n + p +\nc) \times \nc}
\end{equation}
where $\mathsf{S}_\textnormal{eve}^\textrm{T}$ is the transpose of the generalized sensitivity jump matrix \eqref{eqn:Jump_direct_sensitivity}.
\end{theorem}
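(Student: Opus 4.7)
\begin{pf}[Proof proposal]

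The plan is to exploit the fact that the total sensitivity $d\psi/d\brho$ is a genuine total derivative of a quantity (the cost function evaluated from a fixed initial time to a fixed final time) and is therefore independent of the intermediate evaluation time used in the chain-rule identity \eqref{eqn:dpsi-chain-rule}. In particular, the one-sided limits of the chain-rule expression at the event time $\timp$ must agree:
\begin{equation*}
\lambda\beforeimp^{\rm T} \cdot X\beforeimp \;=\; \frac{d\psi}{d\brho} \;=\; \lambda\afterimp^{\rm T} \cdot X\afterimp.
\end{equation*}
This is the only non-trivial global ingredient; everything else is bookkeeping. The first step would be to verify this identity carefully, noting that $\psi$ is defined as an integral plus a terminal cost and therefore has no jump at $\timp$, even though both $\lambda$ and $X$ do.

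Second, I would plug in the already-established forward jump relation $X\afterimp = \mathsf{S}_\textnormal{eve}\cdot X\beforeimp$ from \eqref{eqn:Jump_direct_sensitivity} into the right-hand side, obtaining
\begin{equation*}
\lambda\beforeimp^{\rm T} \cdot X\beforeimp \;=\; \lambda\afterimp^{\rm T}\cdot \mathsf{S}_\textnormal{eve}\cdot X\beforeimp.
\end{equation*}
Third, I would argue that $X\beforeimp$ can be made to span a full-rank set of $(2n+p+\nc)\times p$ matrices by an appropriate choice of parametric dependence of the initial conditions. Concretely, the block $\Gamma$ is the identity and the blocks $\bQ(t_0,\brho) = d\bq_0/d\brho$ and $\bV(t_0,\brho) = d\bv_0/d\brho$ are free to be chosen arbitrarily via the $\brho$-dependence of the initial data; propagating these through the smooth flow up to $\timp^-$ produces, by continuous dependence and invertibility of the forward variational flow, an $X\beforeimp$ that ranges over a set rich enough to force
\begin{equation*}
\lambda\beforeimp^{\rm T} \;=\; \lambda\afterimp^{\rm T}\cdot \mathsf{S}_\textnormal{eve}.
\end{equation*}
Taking the transpose then yields \eqref{eqn:adjoint-sensitivity-jump-matrix}.

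The main obstacle I anticipate is the third step, i.e.\ rigorously justifying that \emph{matrix} equality (not merely equality when acting on a particular $X\beforeimp$) follows from the scalar-valued identity for $d\psi/d\brho$. The cleanest way to sidestep this is to notice that the identity $\lambda^{\rm T}\cdot X = d\psi/d\brho$ is an identity of functions on the $p$-dimensional parameter space, obtained column-by-column from an arbitrary choice of parametrization; alternatively, one may work at the level of individual directional perturbations $\delta\bx\beforeimp\in\Re^{2n+p+\nc}$ and use the fact that $\mathsf{S}_\textnormal{eve}$ is the derivative of the (smooth) event map $\bx\beforeimp\mapsto\bx\afterimp$, so any admissible $\delta\bx\beforeimp$ propagates to $\mathsf{S}_\textnormal{eve}\,\delta\bx\beforeimp$; the invariance of $\langle\lambda,\delta\bx\rangle$ across the event (established exactly as in the smooth adjoint derivation) for every admissible $\delta\bx\beforeimp$ then gives the operator identity. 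Once this point is settled, transposition and relabeling complete the proof, and no further calculation is required since the explicit entries of $\mathsf{S}_\textnormal{eve}^{\rm T}$ are simply read off from \eqref{eqn:Jump_direct_sensitivity}.
\end{pf}
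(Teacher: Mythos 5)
Your proposal is correct and follows essentially the same route as the paper: the invariance of $\lambda^{\rm T}\cdot X$ across the event (which the paper cites from Stewart rather than re-deriving from the chain-rule identity as you do), substitution of the direct jump relation $X\afterimp = \mathsf{S}_\textnormal{eve}\cdot X\beforeimp$, and conclusion via the arbitrariness of $X\beforeimp$ followed by transposition. Your extra care in justifying the "holds for any $X\beforeimp$" step is a welcome refinement of a point the paper passes over quickly, but it is not a different method.
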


\begin{proof}
	We start the proof from the following statement provided in \cite{stewart2010optimal} that mentions that the dot product of the sensitivity state matrix with the adjoint sensitive state matrix is constant at any time, 
	$
	\label{eq:dot product state sensitivity with the adjoint}
	{\lambda\afterimp}^\textrm{T} \cdot
	{\bX\afterimp}  =
	{\lambda\beforeimp}^\textrm{T} \cdot
	{\bX\beforeimp}.
	$
Using \eqref{eqn:Jump_direct_sensitivity}, the previous relationship is equivalent to
	$
	{\lambda\afterimp}^\textrm{T} \cdot
	\mathsf{S}_\textnormal{eve} \cdot
	{\bX\beforeimp} =
	{\lambda\beforeimp}^\textrm{T} \cdot
	{\bX\beforeimp}.
	$
Since this holds for any matrix $\bX\beforeimp$ it follows that
	$
	{\lambda\afterimp}^\textrm{T} \cdot \mathsf{S}_\textnormal{eve}  =
	{\lambda\beforeimp}^\textrm{T},
	$
which is equivalent to \eqref{eqn:adjoint-sensitivity-jump-matrix}.
\end{proof}
\begin{remark}
From \eqref{eqn:forward-sensitivity-jump-matrix} and \eqref{eqn:adjoint-sensitivity-jump-matrix} the adjoint sensitivity jump equations  for ODE systems without constraints are:
\begin{subequations}
\label{eqn:adjoint-sensitivity-jumps} 
\begin{align}
\lambda^\bQ\beforeimp
= &
\; \left(\bQ\afterimp\right)_{\bQ\beforeimp}^{\rm T}  \cdot  \lambda^\bQ\afterimp  +
\left(\bV\afterimp\right)_{\bQ\beforeimp}^{\rm T}  \cdot  	\lambda^\bV\afterimp +
\left(\bZ\afterimp\right)_{\bQ\beforeimp}^{\rm T}  \cdot  {\lambda}^{\bZ}\afterimp
\\
\lambda^\bV\beforeimp =& \;
(h_\bv\beforeimp)^{\rm T} \cdot \lambda^\bV\afterimp 
\\
{\lambda}^{\Gamma}\beforeimp =& \; (h_{\brho}\beforeimp)^{\rm T} \cdot  \lambda^\bV\afterimp  + 
{\lambda}^{\Gamma}\afterimp
\\
{\lambda}^{\bZ}\beforeimp = & \; {\lambda}^{\bZ}\afterimp
\end{align}
\end{subequations}
\end{remark}
\section{Sensitivity analysis for constrained multibody dynamical systems and extended cost functions}
\label{sec:sensitivity_constrained_systems}

\subsection{Representation of constrained multibody systems}
We consider constrained multibody systems that satisfy the following kinematic constraints: 
\begin{subequations}
\label{eq:ConstraintsEq}
\begin{eqnarray}
\label{eq:ConstraintsEq-position}
\bzero  &=& \bPhi, \\
\label{eq:ConstraintsEq-velocity}
\bzero &=& \dPhi =\dPhidq \, \dbq + \bPhi_t \quad \Rightarrow \quad  \dPhidq \bv = -\bPhi_t, \\
\label{eq:ConstraintsEq-acceleration}
\bzero &=& \ddPhi =\dPhidq \, \ddbq + \dPhidqq \, ( \dbq, \dbq ) + \dPhidtdq \, \dbq + \bPhi_{t, \, t} \quad \Rightarrow \quad  \dPhidq \, \dbv = -  ( \dPhidq  \, \bv)\, \bv  - \dPhidtdq \, \bv - \bPhi_{t, \, t} := \Faccel.
\end{eqnarray}
\end{subequations}
Here \eqref{eq:ConstraintsEq-position} is a holonomic position constraint equation $\bPhi(t,\bq,\brho)=\bzero$, where $\bPhi : \mathds{R}^{1+n+p} \to \mathds{R}^{m}$ is a smooth `position constraint' function. The velocity \eqref{eq:ConstraintsEq-velocity} and the acceleration \eqref{eq:ConstraintsEq-acceleration} kinematic constraints are found by differentiating the position constraint with respect to time.

\begin{remark} 
\label{adjoint of the algebraic variables}
Formalisms for constrained multibody systems may involve  Lagrangian coefficients ${\mu} : \Re^{1+2n+p} \to \Re^{m}$ that provide the necessary forces to satisfy the kinematic constraints \cite{Corner2017}. Our notation encompasses the case where the cost function penalizes the accelerations $\dbv$ and the joint forces via the Lagrangian coefficients $\mu$:
\begin{equation}
\label{eq:CostFunction-explained-ConstrainedSystem}
\begin{split}
\tilde{g}\bigl(\,t,\,\bq,\,\bv,\,\brho\,\bigr)&=g\bigl(\,t,\,\bq,\,\bv,\,\dbv(\,t,\,\bq,\,\bv, \, \brho),\,\brho, \, \mu (\,t,\,\bq,\,\bv, \, \brho) \bigr). 
\end{split}
\end{equation} 
It is shown in \ref{eq:CostFunction-explained} that the terminal cost function $\tilde{w}$ cannot directly depend on the acceleration $\dbv$ or on the Lagrange coefficients $\mu$, and therefore the derivatives are $\tilde{w}_{\dbv}=0$ and $\tilde{w}_{\mu}=0$. In a different notation, such result is also shown in \cite{Dopico2014}. Using equation \eqref{eq:sensitivity cost function t_F} we see that the final condition for the adjoint of the algebraic variables $\mu$ is zero,   $\lambda^{\Lambda}_{t_F}=  \left.\tilde{w}_{\mu} \right|_{t_F}=0$. 
\end{remark}
\subsection{Direct and adjoint sensitivity analysis for smooth systems in the penalty ODE formulation}
Define the extended mass matrix $\overline{\Mass}:\mathds{R} \times \mathds{R}^{n} \times \mathds{R}^{n} \times \mathds{R}^{p} \rightarrow \mathds{R}^{n \times n}$ and the extended right hand side function $\overline{\Force}:\mathds{R} \times \mathds{R}^{n} \times \mathds{R}^{n} \times \mathds{R}^{p} \rightarrow \mathds{R}^{n}$ as:
\begin{align}
\overline{\Mass}\left(t,\bq,\bv,\brho\right) &:= \Mass\left(t,\bq,\bv,\brho\right) +\dPhidq^{\rm T}\left(t,\bq,\bv,\brho\right)\cdot \alpha\cdot \dPhidq\left(t,\bq,\bv,\brho\right), 
\\
\overline{\Force}\left(t,\bq,\bv,\brho\right) &:= {\Force}\left(t,\bq,\bv,\brho\right)
-\dPhidq^{\rm T}\cdot \alpha\cdot \left(\dtdPhidq\, \bv +
\dPhi_{t}+2 \, \xi\, \omega\, \dPhi+\omega^2 {\bPhi}\right),
\end{align}
where $\alpha \in \Re^{m \times m}$ is the penalty factor of the ODE penalty formulation, $\xi\in \Re$ and $\omega\in \Re$ are the natural frequency and damping ratio coefficients of the formulation, respectively. The functions $\Phi$, $\dPhi$,  $\ddPhi$ $ :\mathds{R} \times \mathds{R}^{n} \times \mathds{R}^{n} \times \mathds{R}^{p} \rightarrow \mathds{R}^{m}$ are the position, velocity and acceleration kinematic constraints, respectively. The penalty formulation of a constrained rigid multibody system is written as a first order ODE:
\begin{equation}
\label{eq:penalty formulation}
\begin{cases}
\dbq &= \bv, \\
\dbv &= {\feom} \fin = \overline{\Mass}^{-1}\fin \cdot
\overline{\Force}\fin.
\end{cases}
\end{equation}
The Lagrange multipliers associated to the constraint forces are estimated as 
$
\label{eq:lambda}
\blambda^{*}=\alpha\, \left(\, \ddot{\bPhi}+
2 \, \xi\, \omega\, \dPhi+\omega^2 \,\bPhi \, \right)\,.
$
The sensitivities of the state variables of the system with respect to parameters evolve according to the tangent linear model derived in \cite{Sandu_2014_sensitivity_ODE_multibody,Sandu_2014_MBSVT,Zhu_2014_PhD,Sandu2015dynamic,Sandu_2017_vehicleOptimization}. Since the penalty formulation \eqref{eq:penalty formulation} evolves as an ODE, we can compute the direct sensitivities using \eqref{eq:TLM-ODE-matrix} with $\feomq=\overline{\Force}_\bq  - \overline{\Mass}_\bq \, \dbv, \quad 	\feomv=\overline{\Force}_\bv     , \quad \feomrho =
\overline{\Force}_\brho -\overline{\Mass}_{\brho} \, \dbv, \,$ as shown in Appendix \ref{appendix:derivatives-calculation}.
The derivatives 
$\overline{\Mass}_{\brho},$  $ \overline{\Mass}_\bq$, $\overline{\Force}_\bq $,  $\overline{\Force}_\bv$, and $ \overline{\Force}_\brho$  are given in \cite{Corner2017}.
Similarly, one can compute the adjoint sensitivities of the penalty formulation  \eqref{eq:penalty formulation} using \eqref{eq:canonical-adjoint-ODE}.

\subsection{Direct and adjoint sensitivity analysis for smooth systems in the index-1 differential-algebraic formulation}

\begin{definition}[Constrained multibody dynamics: the index-1 DAE formulation]
The index-1 formulation of the equations of motion is obtained by replacing the position constraint \eqref{eq:ConstraintsEq-position} with the acceleration constraint \eqref{eq:ConstraintsEq-acceleration}:
\begin{equation}
\label{eq:EOM-DAE-index1}
\begin{bmatrix}
\bI & \bzero & \bzero \\
\bzero & {\Mass}\left(t,\bq,\brho\right) & \dPhidq^{\rm T}\left(t,\bq,\brho\right) \\
\bzero & \dPhidq\left(t,\bq,\brho\right) & \bzero
\end{bmatrix}
\cdot
\begin{bmatrix}
\dbq \\ \dbv \\ \mu
\end{bmatrix}
=
\begin{bmatrix}
\bv \\
{\Force} \left(t,\bq,\bv,\brho\right)  \\
\Faccel\left(t,\bq,\bv,\brho\right)
\end{bmatrix},
\quad t_0 \leq t \leq  t_F,  \quad \bq(t_0)=\bq_0(\brho),\quad\bv(t_0)=\bv_0(\brho).
\end{equation}
The algebraic equation has the form $\fdaelb -\mu=0$.
\end{definition}

\begin{definition} [Tangent linear index-1 DAE]
\label{def:canonical-dae-sensitivity}
Sensitivities of solutions \eqref{eq:sensitivity-matrices} and multipliers: 
\begin{equation}
\label{eq:sensitivity-matrix-L}
\Lambda_i(t,\brho) := \displaystyle \frac{d\, \mu(t,\brho)}{d\, \brho_i} \in \mathds{R}^{m}, ~~ i=1,\dots,p; 
\end{equation}
of the system  \eqref{eq:EOM-DAE-index1} with respect to parameters evolve according to the tangent linear model derived in \cite{Sandu_2014_sensitivity_ODE_multibody,Sandu_2014_MBSVT,Zhu_2014_PhD,Sandu2015dynamic,Sandu_2017_vehicleOptimization}:
\begin{eqnarray}
\label{eq:canonical-DAE-sensitivity-long}
\begin{bmatrix}
\dot{\bQ} \\
\dbV \\ 
\dot{\Gamma} \\
\Lambda \\ 
\dbZ
\end{bmatrix}=
\begin{bmatrix}
\bV  \\
\fdaedvq \bQ +\fdaedvv \bV +\fdaedvrho \\
\Zero_{p \times p} \\
\fdaelbq \bQ +\fdaelbv \bV +\fdaelbrho \\
\big(g_\bq +  g_{\dbv} \, \fdaedvq +g_{\mu} \, \fdaelbq \big)\cdot \bQ  
+  \big(g_\bv +  g_{\dbv} \, \fdaedvv +g_{\mu} \, \fdaelbv \big)\cdot \bV
+  \big(g_{\brho}+ g_{\dbv}\,\fdaedvrho +g_{\mu}\,\fdaelbrho \big)   
\end{bmatrix}.
\end{eqnarray}
It is shown in Appendix \ref{appendix:derivatives-calculation} that equation \eqref{eq:canonical-DAE-sensitivity-long} can be written in matrix form as follows:
\begin{eqnarray}
\label{eq:canonical-DAE-sensitivity}
\begin{bmatrix}
\dot{\bQ} \\
\dbV \\ 
\dot{\Gamma} \\
\Lambda \\ 
\dbZ
\end{bmatrix}
=
\begin{bmatrix}
\Zero_{n \times n} & \bI_{n \times n} & \Zero_{n \times p} & \Zero_{n \times m}  & \Zero_{n \times \nc} \\
\fdaedvq & \fdaedvv & \fdaedvrho & \Zero_{n \times m} & \Zero_{n \times \nc} \\
\Zero_{p \times n} & \Zero_{p \times n} & \Zero_{p \times p} & \Zero_{p \times m}  
& \Zero_{p \times \nc} \\
\fdaelbq  & \fdaelbv  & \fdaelbrho  & \Zero_{m \times m}  & \Zero_{m \times \nc}  \\
\tilde{\bg}_{\bq}  & \tilde{\bg}_{\bv}  & \tilde{\bg}_{\brho}  & \Zero_{\nc \times m}  
& \Zero_{\nc \times \nc}  \\
\end{bmatrix} \cdot
\begin{bmatrix}
\bQ \\
\bV \\ 
\Gamma \\
\Lambda \\ 
\bZ
\end{bmatrix},
\end{eqnarray}
with initial conditions given by Eq. \eqref{eq:TLM-ODE-compact}. Using Appendix \ref{appendix:derivatives-calculation}, the derivatives of the DAE function are:
\[
\fdaeq=  
\begin{bmatrix}
{\Mass} & \dPhidq^{\rm T} \\
 \dPhidq & \bzero
\end{bmatrix}^{-1}
\begin{bmatrix}
\Force_\bq -\Mass_{\bq} \, \dbv - \dPhidqq ^{\rm T}\, \mu 
\\
\Faccel_{\bq}- \dPhidqq \dot{\bv}   
\end{bmatrix}
, \;
\fdaev= \begin{bmatrix}
{\Mass} & \dPhidq^{\rm T} \\
 \dPhidq & \bzero
\end{bmatrix}^{-1}
\begin{bmatrix}
\Force_\bv \\
\Faccel_\bv 
\end{bmatrix}
,\;
\fdaerho=\begin{bmatrix}
{\Mass} & \dPhidq^{\rm T} \\
 \dPhidq & \bzero
\end{bmatrix}^{-1}
\begin{bmatrix}
\Force_{\brho}-\Mass_{\brho} \, \dbv 
- \dPhidqdrho^{T}\, \mu\\
\Faccel_{\brho} - \dPhidqdrho \, \dbv
\end{bmatrix}.
\]
\end{definition}
\begin{definition} [Continuous adjoint index-1 DAE system]
\label{def:adjoint-index1}
The continuous adjoint differential equation corresponding to the index-1 DAE tangent linear model \eqref{eq:canonical-DAE-sensitivity} is:
\begin{eqnarray}
\label{eq:Adjoint-DAE-sensitivity}
\begin{bmatrix}
\dot{{\lambda}^{\bQ}} \\
\dot{{\lambda}^{\bV}}\\ 
\dot{{\lambda}^{\Gamma}} \\
{{\lambda}^{\Lambda}} \\ 
\dot{{\lambda}^{\bZ}}
\end{bmatrix}
=
- \begin{bmatrix}
\Zero_{n \times n} & \fdaedvq^{\rm T} & \Zero_{n \times p} & \fdaelbq^{\rm T}  
& \tilde{\bg}_{\bq}^{\rm T} \\
  \bI_{n \times n} & \fdaedvv^{\rm T}  & \Zero_{n \times p}  & \fdaelbv^{\rm T}  
  & \tilde{\bg}_{\bv}^{\rm T} \\
   \Zero_{p \times n}   & \fdaedvrho^{\rm T} & \Zero_{p \times p}  & \fdaelbrho^{\rm T}   & \tilde{\bg}_{\brho}^{\rm T} \\
   \Zero_{m \times n}   & \Zero_{m \times n}  & \Zero_{m \times p}   & \Zero_{m \times m}
      & \Zero_{m \times \nc}  \\
   \Zero_{\nc \times n}  & \Zero_{\nc \times n}  & \Zero_{\nc \times p}    & \Zero_{\nc \times m} 
      & \Zero_{\nc \times \nc}   \\
\end{bmatrix} \cdot
\begin{bmatrix}
{{\lambda}^{\bQ}} \\
{{\lambda}^{\bV}}\\ 
{{\lambda}^{\Gamma}} \\
{{\lambda}^{\Lambda}} \\ 
{{\lambda}^{\bZ}}
\end{bmatrix},	\quad t_F \geq t \geq t_0, \quad
\lambda(t_F, \rho) :=
\begin{bmatrix}
\tilde{w}_{q}^\mathrm{T}(t_F, \rho) \\
\tilde{w}_{v}^\mathrm{T}(t_F, \rho) \\
\tilde{w}_{{\rho}}^\mathrm{T}(t_F, \rho) \\ 
0_{m \times \nc} \\
\textrm{I}_{\nc \times \nc}
\end{bmatrix} \in \mathds{R}^{ (2n+p+m+\nc) \times \nc }.
\end{eqnarray}
Noting  from Remark \ref{adjoint of the algebraic variables} that the algebraic equation in \eqref{eq:Adjoint-DAE-sensitivity} reads:
\[
{\lambda}^{\Lambda}(t) = 0,\quad t_F \geq t \geq t_0,
\]
the index-1 adjoint DAE  \eqref{eq:Adjoint-DAE-sensitivity} can be reduced to the following adjoint ODE:
\begin{eqnarray}
\label{eq:Adjoint-DAE-sensitivity1}
\begin{bmatrix}
\dot{{\lambda}^{\bQ}} \\
\dot{{\lambda}^{\bV}}\\ 
\dot{{\lambda}^{\Gamma}} \\
\dot{{\lambda}^{\bZ}}
\end{bmatrix}
=
- \begin{bmatrix}
\Zero_{n \times n} & \fdaedvq^{\rm T} & \Zero_{n \times p}   
& \tilde{\bg}_{\bq}^{\rm T} \\
\bI_{n \times n} & \fdaedvv^{\rm T}  & \Zero_{n \times p}  
& \tilde{\bg}_{\bv}^{\rm T} \\
\Zero_{p \times n}   & \fdaedvrho^{\rm T} & \Zero_{p \times p}   & \tilde{\bg}_{\brho}^{\rm T} \\
\Zero_{\nc \times n}  & \Zero_{\nc \times n}  & \Zero_{\nc \times p}    
& \Zero_{\nc \times \nc}   \\
\end{bmatrix} \cdot
\begin{bmatrix}
{{\lambda}^{\bQ}} \\
{{\lambda}^{\bV}}\\ 
{{\lambda}^{\Gamma}} \\
{{\lambda}^{\bZ}}
\end{bmatrix},	\quad t_F \geq t \geq t_0, \quad
\lambda(t_F, \rho) :=
\begin{bmatrix}
\tilde{w}_{q}^\mathrm{T}(t_F, \rho) \\
\tilde{w}_{v}^\mathrm{T}(t_F, \rho) \\
\tilde{w}_{{\rho}}^\mathrm{T}(t_F, \rho) \\ 
\textrm{I}_{\nc \times \nc}
\end{bmatrix} \in \mathds{R}^{ (2n+p+\nc) \times \nc }.
\end{eqnarray}

\end{definition}

\subsection{Direct sensitivity analysis for hybrid constrained dynamical systems}
\label{sec:multibody-constrained-direct}

We now discuss constrained dynamical systems when the dynamics is piecewise smooth in time. Performing a sensitivity analysis for a constrained rigid hybrid multibody dynamic system requires finding the jump conditions at the time of event. These jump equations are explained in our previous work \cite{Corner2017}. We summarize below the jump equations at the time of event:
\begin{itemize}
\item The generalized position state variables remain the same , i.e., $\bq\afterimp = \bq\beforeimp = \qtimp$ and need to satisfy both constraint functions
$
\bPhi^-\beforeimp:=\bPhi^-\left(\timp,\qtimp, \brho \right) = 0,$ and $ \,
\bPhi^+\timpplus:=\bPhi^+\left(\timp,\qtimp, \brho \right) = 0.
$

\item The velocity state variables jump from their values right  before the event to right after the event according to the jump equation:
\begin{equation}
\label{eq:jump-in-velocity-constrained-dof}
\bv_\textnormal{dof+}\afterimp =h \Big({\timp} , \qtimp, \bv_\textnormal{dof-}\beforeimp, {\brho} \Big), \qquad
h : \mathds{R}^{1+n+f^-+p} \to  \mathds{R}^{f^+}.
\end{equation}
The jump function \eqref{eq:jump-in-velocity-constrained-dof} is assumed to be smooth and defined in terms of the velocity degrees of freedom (the independent components). 

\item  The jumps in velocity  cannot be arbitrary for the dependent components. They are dependent of the degree of freedom and are obtained from solving the velocity constraints leading to:
\begin{equation}
\label{eq:jump-in-velocity-constrained-dep}
\begin{split}
 \bv_{\rm dep+}\afterimp &= -\left(\bPhi^+_{\bq_{\rm dep+}}\afterimp\right)^{-1}\cdot \left( \bPhi^+_{\bq_{\rm dof+}}\afterimp\, \bv_{\rm dof+}\afterimp  +  \bPhi^+_t\afterimp \right) \\
 &= \Rez^+\afterimp\,  \bv_{\rm dof+}\afterimp -\left(\bPhi^+_{\bq_{\rm dep+}}\afterimp\right)^{-1}\cdot   \bPhi^+_t\afterimp.
 \end{split}
\end{equation}
Where $\Rez^\pm$ corresponds to the null space of the constraints if the constraints are scleronomic (non explicitly time dependent).
\end{itemize}
There are two types of velocity jumps that our formalism covers \eqref{eq:jump-in-velocity-constrained-dof}--\eqref{eq:jump-in-velocity-constrained-dep}:
\begin{itemize}
\item The case where the event consists of an elastic contact/collision/impact on the DOF components of the velocity state. The impulsive (external) contact forces act to change the DOF components without changing the set of constraint equations,  $\bPhi^+\equiv\bPhi^-$. 
	
\item The case where the event consists solely of an inelastic collisions and a change of constraints $\bPhi^+\ne\bPhi^-$, without any external force modifying the independent velocities.
The impulsive (internal) constraints forces at the time of event  are solved by using a popular approach in robotics \cite{Kolathaya2016}:
\begin{subequations}
\label{eq:DAE-impulse}
\begin{equation}
\label{eq:DAE-impulse-1}
\begin{bmatrix}
{\Mass\atimp} & (\dPhidq^+)^{\rm T}\atimp \\
\dPhidq^+\atimp & \bzero
\end{bmatrix}
\cdot
\begin{bmatrix}
\vplus \\ \delta\mu
\end{bmatrix}
=
\begin{bmatrix}
 \Mass\atimp \cdot \vminus \\
-\bPhi_t^+\atimp 
\end{bmatrix},
\end{equation}
or, equivalently,
\begin{equation}
\label{eq:DAE-impulse-2}
\begin{bmatrix}
\vplus \\ \delta\mu
\end{bmatrix}
=
\begin{bmatrix}
{\Mass\atimp} & (\dPhidq^+)^{\rm T}\atimp \\
\dPhidq^+\atimp & \bzero
\end{bmatrix}^{-1}
\cdot
\begin{bmatrix}
 \Mass\atimp \cdot \vminus \\
-\bPhi_t^+\atimp 
\end{bmatrix}=
\begin{bmatrix}
\fdaeimpv {\left(\,\timp,\,\qtimp ,\,\vminus,\,\brho \, \right)} \\
\fdaeimpl {\left(\,\timp,\,\qtimp ,\,\vminus,\,\brho \, \right)}
\end{bmatrix}.
\end{equation}
\end{subequations}
The second equation  \eqref{eq:DAE-impulse-2} imposes the velocity constraint on both independent and dependent coordinates, which is covered by our formalism as:
\begin{equation}
\bv_{\rm dof+}\afterimp = \Permutation_{\rm dof+}\,\fdaeimpv {\left(\,\timp,\,\qtimp ,\,\vminus,\,\brho \, \right)}
=: h\left(\,\timp,\,\qtimp ,\,\bv_{\rm dof-}\beforeimp,\,\brho \, \right),
\end{equation}
where $\Permutation = \begin{bmatrix} \Permutation_{\rm dep} \\ \Permutation_{\rm dof} \end{bmatrix}$ is a permutation matrix that partitions the state variables into dependent and independent variables.
\end{itemize}

Finally, the jump conditions at the time of event in the sensitivity state matrix are: 
\begin{itemize}

\item  The independent components of the sensitivity of the generalized positions right after the event: 
\begin{subequations}
\label{eq:jump-position-sensitivity-constrained}
\begin{equation}
\label{eq:jump-position-sensitivity-constrained-dof}
\bQ_\textnormal{dof+}\afterimp = \bQ_\textnormal{dof+}\beforeimp  - \bigg( \bv_\textnormal{dof+}\afterimp - \bv_\textnormal{dof+}\beforeimp \bigg) \cdot \frdtdrho.
\end{equation}
which are equivalent to:
\begin{equation}
\Permutation^+_\textnormal{dof+} \cdot \bigg(\bQ\afterimp - \bQ\beforeimp  \bigg)
= - \Permutation^+_\textnormal{dof+} \cdot \bigg( \bv\afterimp - \vminus\bigg) \cdot \frdtdrho.
\end{equation}

\item  The dependent components of the sensitivity of the generalized positions right after the event:
\begin{equation}
\label{eq:jump-position-sensitivity-constrained-dep}
\bQ_\textnormal{dep+}\afterimp =  \Rez^+\afterimp \cdot \bQ_\textnormal{dof+}\afterimp - \left. \left(\bPhi^+_{\bq_\textnormal{dep+}}\afterimp\right)^{-1}\, \bPhi^+_\brho \right\afterimp.
\end{equation}
\end{subequations}

\item The independent coordinates of the velocity sensitivities right after the event,
\begin{align}
\label{eq:jump-in-velocity-sensitivity-constrained-dof}
\bV_\textnormal{dof+}\afterimp &= h_{\bq}\beforeimp \cdot \qrhominus 
+h_{\bv_\textnormal{dof-}}\beforeimp \cdot  \bV_\textnormal{\rm dof-}\beforeimp \\ & \nonumber
+\Bigl( h_{\bq}\beforeimp\cdot \vminus-\ddbq_\textnormal{dof+}\afterimp +h_{\bv_\textnormal{dof-}}\beforeimp\cdot \ddbq_\textnormal{dof-}\beforeimp {+ h_t\beforeimp} \Bigr) \cdot \frdtdrho
{+ h_\brho\beforeimp},
\end{align}
where the Jacobians of the jump function are:
\[
\begin{split}
h_{\bq}\beforeimp &:= \frac{\partial h}{\partial\bq}\big( \bq\atimp,\bv_\textnormal{dof-}\beforeimp, \brho \big) \in \Re^{f^+ \times n},
\qquad
h_{\bv_\textnormal{dof-}}\beforeimp := \frac{\partial h}{\partial\bv_\textnormal{dof-}}\big( \bq\atimp,\bv_\textnormal{dof-}\beforeimp, \brho \big) \in \Re^{f^+ \times f^-}.
\\
h_t\beforeimp &:= \frac{\partial h}{\partial t}\big( \bq\atimp,\bv_\textnormal{dof-}\beforeimp, \brho \big) \in \Re^{f}, \qquad \;\;\;\;
h_\brho\beforeimp := \frac{\partial h}{\partial\brho}\big( \bq\atimp,\bv_\textnormal{dof-}\beforeimp, \brho \big)\in \Re^{f \times p}.
\end{split}
\]
\item The dependent components  of the velocity sensitivities right after the event,
\begin{equation}
\label{eq:jump-in-velocity-sensitivity-constrained-dep}
\bV_\textnormal{dep+}\afterimp =
\left. - \big( \bPhi^+_{\bq_\textnormal{dep+}} \afterimp \big)^{-1}
 \left(
\bPhi^+_{\bq_\textnormal{dof+}} \cdot \bV_\textnormal{dof+} 
 + \big(\dPhidqqplus \,\bv +  \dPhidtdqplus \big) \cdot \bQ 
 +  \dPhidqdrhoplus \, \bv  + \dPhidtdrhoplus
 \right)\right\afterimp.
\end{equation}
\end{itemize}

\begin{definition}[The generalized sensitivity jump matrix for elastic impact]
The jump equations \eqref{eq:jump-in-velocity-constrained-dof}--\eqref{eq:jump-in-velocity-sensitivity-constrained-dep} for constrained systems can be written compactly in matrix form as a jump of the state sensitivity matrix ${X}$ at the time of the event,  ${X}\afterimp= \mathsf{S}_\textnormal{eve} \cdot{X}\beforeimp$, where $\mathsf{S}_\textnormal{eve}$ represents the generalized jump sensitivity matrix:
\begin{eqnarray}
\label{eqn:Jump_direct_sensitivity_constraints}
\begin{bmatrix}
\bQ_\textnormal{dep+}\afterimp\\
\bQ_\textnormal{dof+}\afterimp \\  
\bV_\textnormal{dep+}\afterimp\\
\bV_\textnormal{dof+}\afterimp \\
\Gamma\afterimp\\ 
\bZ\afterimp
\end{bmatrix}
=
\underbrace{
\begin{bmatrix}
\left(\bQ_\textnormal{dep+}\afterimp \right)_{\bQ\beforeimp }   & \Zero_{(n-f)\times (n-f)}  & \Zero_{(n-f)\times f}  &  D& \Zero_{(n-f)\times \nc} \\
\left(\bQ_\textnormal{dof+}\afterimp \right)_{\bQ\beforeimp}   & \Zero_{f\times (n-f)}  & \Zero_{f\times f}  &  \Zero_{f\times p}   & \Zero_{f\times \nc} \\
\left(\bV_\textnormal{dep+}\afterimp \right)_{\bQ\beforeimp }  & \Zero_{(n-f)\times (n-f)}  &  \left(\bV_\textnormal{dep+}\afterimp \right)_{\bV_\textnormal{dof+}\beforeimp }    &  K  & \Zero_{(n-f)\times \nc} \\
\left(\bV_\textnormal{dof+}\afterimp \right)_{\bQ\beforeimp}  &  \Zero_{f\times (n-f)}  & h_\bv\beforeimp   &  h_{\brho}\beforeimp  & \Zero_{f\times \nc} \\
\Zero_{p\times n}  & \Zero_{p\times (n-f)}   & \Zero_{p\times f}  &  \bI_{p\times p}   & \Zero_{p\times \nc} \\
\left(\bZ\afterimp\right)_{\bQ\beforeimp} & \Zero_{\nc \times (n-f)}  & \Zero_{\nc \times f} & 
\Zero_{\nc \times p}  & \bI_{\nc \times \nc}  
\end{bmatrix} 
}_{\mathsf{S}_\textnormal{eve}}
\cdot
\begin{bmatrix}
\bQ_\textnormal{dep+}\beforeimp\\
\bQ_\textnormal{dof+}\beforeimp \\
\bV_\textnormal{dep+}\beforeimp\\
\bV_\textnormal{dof+}\beforeimp \\
\Gamma\beforeimp \\  
\bZ\beforeimp \\
\end{bmatrix}.
\end{eqnarray}
The Jacobians of the jump equations with respect to the sensitivity state before the time of event are:
\begin{eqnarray}
\left(\bQ_\textnormal{dof+}\afterimp\right)_{\bQ\beforeimp} =  \Permutation^+_\textnormal{dof+} 
\left(\rm \bI_{n \times n} -  \bigg( \vplus - \vminus \bigg)  \cdot 
\left(\frdtdrho \right)_{\bQ\beforeimp} \right) (\Permutation^-)^{\rm T} \in \mathds{R}^{f \times n},
\end{eqnarray}
with
\begin{eqnarray}
\left(\frdtdrho \right)_{\bQ\beforeimp}=
- \, \dfrac{\displaystyle \frac{d r}{d\bq}\left(\qtimp\right) }{\displaystyle \frac{d r}{d\bq}\left(\qtimp\right)  \cdot  \vminus }   
\in \mathds{R}^{1 \times n}.
\end{eqnarray}
It follows that:
\begin{equation}
\left(\bQ_\textnormal{dep+}\afterimp \right)_{\bQ\beforeimp }  =  \Rez^+\afterimp \cdot  \left(\bQ_\textnormal{dof+}\afterimp\right)_{\bQ\beforeimp} \in \mathds{R}^{(n-f) \times n},
\end{equation}
and
\begin{equation}
\begin{split}
\left( \bV_\textnormal{dof+}\afterimp \right)_{\bQ\beforeimp }  = 
\left(h_{\bq}\beforeimp
+\Bigl( h_{\bq}\beforeimp\cdot \vminus-\ddbq_\textnormal{dof+}\afterimp +h_{\bv_\textnormal{dof-}}\beforeimp\cdot \ddbq_\textnormal{dof-}\beforeimp {+ h_t\beforeimp} \Bigr) \cdot \left(\frdtdrho \right)_{\bQ\beforeimp} \right)\cdot (\Permutation^-)^{\rm T} \in \mathds{R}^{f \times n}
\end{split}.
\end{equation}
Rewriting  \eqref{eq:jump-in-velocity-sensitivity-constrained-dep} as:
\begin{equation}
\left(\bV_\textnormal{dep+}\afterimp \right)=
\left.  \left( \Rez^+ \cdot \bV_\textnormal{dof+} 
+  {\overline \Rez^+} \cdot \bQ 
+  C \right)
\right\afterimp\in \mathds{R}^{(n-f)\times p}
\end{equation}
or, equivalently, as: 
\begin{equation}
\begin{split}
\left(\bV_\textnormal{dep+}\afterimp \right) &=
\left.  \left( \Rez^+\cdot \bV_\textnormal{dof+} 
+  {\overline \Rez^+ } \cdot 
(\Permutation^-)^{\rm T} \cdot 
\begin{bmatrix}
\bQ_\textnormal{dep+}\beforeimp\\
\bQ_\textnormal{dof+}\beforeimp
\end{bmatrix}
+  C \right)
\right\afterimp, \\
{\overline \Rez^+\afterimp } &= \left. - \big( \bPhi^+_{\bq_\textnormal{dep+}} \big)^{-1}  \left( \dPhidqqplus \,\bv +  \dPhidtdqplus  \right)\right\afterimp, \\
{ C} &=\left. - \big( \bPhi^+_{\bq_\textnormal{dep+}}  \big)^{-1}
\left(  \dPhidqdrhoplus \, \bv  + \dPhidtdrhoplus
\right)\right\afterimp \in \mathds{R}^{(n-f)\times p},
\end{split}
\end{equation}
we find the following expressions for the Jacobians:
\begin{equation}
\begin{split}
\left(\bV_\textnormal{dep+}\afterimp \right)_{\bQ\beforeimp } &=
 \Rez^+\afterimp \cdot \left(\bV_\textnormal{dof+}\afterimp \right)_{\bQ\beforeimp}  
+  {\overline \Rez^+\afterimp } \cdot 
(\Permutation^-)^{\rm T} 
\quad \mathds{R}^{(n-f)\times n}, \\
\left(\bV_\textnormal{dep+}\afterimp \right)_{\bV_\textnormal{dof+}\beforeimp } &=
 \Rez^+\afterimp \cdot  h_\bv\beforeimp.
\end{split}
\end{equation}

The expressions for $D$ and $K$  in \eqref{eqn:Jump_direct_sensitivity_constraints} are:
\begin{equation}
\begin{split}
D &=  - \left. \left(\bPhi^+_{\bq_\textnormal{dep+}}\right)^{-1}\, \bPhi^+_\brho \right\afterimp
\in \mathds{R}^{(n-f)\times p}, \\
K &=\left. \left( C + \Rez^+ \cdot h_\brho\beforeimp+ \overline \Rez^+  \cdot
(\Permutation^-)^{\rm T}  \cdot
\begin{bmatrix}  
D \\  
\Zero_{f \times p} \\ 
\end{bmatrix} \;  \right) \;
\right\afterimp \in \mathds{R}^{(n-f)\times p}.
\end{split}
\end{equation}

\end{definition}

\begin{definition}[The generalized sensitivity jump matrix for inelastic impact with a sudden change of constraints]

Consider the event consisting of an inelastic collision and a sudden change of constraints  \eqref{eq:DAE-impulse}. 
The jump in the velocity sensitivity for constrained systems due to impulsive forces, presented in \cite{Corner2017}, is determined as follows: 
\begin{equation}
\begin{split}
\begin{bmatrix}
\dqrhoplus \\ \delta\Lambda
\end{bmatrix}
=&
-
\begin{bmatrix}
{\Mass} & \dPhidqplus^{\rm T} \\
\dPhidqplus & \bzero
\end{bmatrix}^{-1}
\begin{bmatrix}
{\Mass}_{\bq} \cdot (\vplus - \vminus) + \dPhidqqplus^{\rm T} \cdot \delta\lambda \\
\dPhidqqplus \cdot \vplus
\end{bmatrix}\cdot \qrhoplus +
\begin{bmatrix}
{\Mass} & \dPhidqplus^{\rm T} \\
\dPhidqplus & \bzero
\end{bmatrix}^{-1}
\begin{bmatrix}
\Mass \\ 
\bzero
\end{bmatrix} \cdot \dqrhominus
\\& -
\begin{bmatrix}
{\Mass} & \dPhidqplus^{\rm T} \\
\dPhidqplus & \bzero
\end{bmatrix}^{-1}
\begin{bmatrix}
{\Mass}_{\brho} \cdot \vplus +  \dPhidqdrhoplus^{\rm T} \cdot \delta\lambda \\
\dPhidqdrhoplus \cdot \vplus  + \dPhidtdrhoplus \cdot \vminus
\end{bmatrix}
-
\begin{bmatrix}
{\Mass} & \dPhidqplus^{\rm T} \\
\dPhidqplus & \bzero
\end{bmatrix}^{-1}
\begin{bmatrix}
\bzero \\
\dPhidtdqplus \cdot \vminus
+ \dPhidtdvplus \cdot \vminus  
\end{bmatrix},
\end{split}
\end{equation}
which simplifies to:
\begin{equation}
\begin{bmatrix}
\dqrhoplus \\ \delta\Lambda
\end{bmatrix}
=
\fun_\bq^{\scalebox{0.6}{\rm DAE-imp}} \cdot \qrhoplus 
+\fun_{\vminus}^{\scalebox{0.6}{\rm DAE-imp}} \cdot \dqrhominus
+ \fun_{\brho}^{\scalebox{0.6}{\rm DAE-imp}} 
+ \fun_{t}^{\scalebox{0.6}{\rm DAE-imp}} .
\end{equation}
Thus, the jump the velocity state variables  at the time of event is 
\begin{equation}
\bV\afterimp  =\fdaeimpvbq \afterimp \; \bQ\afterimp +\fdaeimpvbv\afterimp \; \bV\afterimp +\fdaeimpvbrho\afterimp, 
\end{equation}
and the jump in the sensitivity of the Lagrange multipliers from $\Lambda\beforeimp \to \Lambda\afterimp$  is:
\begin{equation}
\Lambda\afterimp  =\Lambda\beforeimp  +\fdaeimplbq \afterimp \; \bQ\afterimp +\fdaeimplbv\afterimp \; \bV\afterimp +\fdaeimplbrho\afterimp.
\end{equation}
The corresponding sensitivity jump matrix \eqref{eqn:Jump_direct_sensitivity_constraints} is:
\begin{eqnarray}
\label{eqn:Jump_direct_sensitivity_constraints2}
\begin{bmatrix}
\bQ_\textnormal{dep+}\afterimp\\
\bQ_\textnormal{dof+}\afterimp \\  
\bV\afterimp \\
\Lambda\afterimp \\
\Gamma\afterimp\\ 
\bZ\afterimp
\end{bmatrix}
=
\underbrace{
	\begin{bmatrix}
	\left(\bQ_\textnormal{dep+}\afterimp \right)_{\bQ\beforeimp }   & \Zero_{(n-f)\times n}  & \Zero_{(n-f)\times m}  &  D& \Zero_{(n-f)\times \nc} \\
	\left(\bQ_\textnormal{dof+}\afterimp \right)_{\bQ\beforeimp}   & \Zero_{f\times n}  & \Zero_{f\times m}  &  \Zero_{f\times p}   & \Zero_{f\times \nc} \\
	\fdaeimpvbq \afterimp \; & \fdaeimpvbv\afterimp \; & \Zero_{n \times m} & \fdaeimpvbrho\afterimp & \Zero_{n \times nc} \\	
	\fdaeimplbq \afterimp \; & \fdaeimplbv\afterimp \; & \Zero_{m \times m} & \fdaeimplbrho\afterimp & \Zero_{m \times nc} \\	
	\Zero_{p\times n}  & \Zero_{p\times n}   & \Zero_{p\times m}  &  \bI_{p\times p}   & \Zero_{p\times \nc} \\
	\left(\bZ\afterimp\right)_{\bQ\beforeimp} & \Zero_{\nc \times n}  & \Zero_{\nc \times m} & 
	\Zero_{\nc \times p}  & \bI_{\nc \times \nc}  
	\end{bmatrix} 
}_{\mathsf{S}_\textnormal{eve}}
\cdot
\begin{bmatrix}
\bQ_\textnormal{dep+}\beforeimp\\
\bQ_\textnormal{dof+}\beforeimp \\
\bV\beforeimp\\
\Lambda\beforeimp  \\
\Gamma\beforeimp \\  
\bZ\beforeimp \\
\end{bmatrix}.
\end{eqnarray}
\end{definition}

\subsection{Adjoint sensitivity analysis for hybrid constrained dynamical systems}
\label{sec:multibody-constrained-adjoint}

\begin{definition}[Jump in adjoint sensitivity for constrained systems with elastic impact]
The transpose of the direct sensitivity jump matrix  $\mathsf{S}_\textnormal{eve}^\textrm{T}$ \eqref{eqn:adjoint-sensitivity-jump-matrix} associated with an elastic impact \eqref{eqn:Jump_direct_sensitivity_constraints} is:
\begin{eqnarray}
\mathsf{S}_\textnormal{eve}^\textrm{T}
=
\begin{bmatrix}
\left(\bQ_\textnormal{dep+}\afterimp \right)^{\rm T}_{\bQ\beforeimp } 
&
\left(\bQ_\textnormal{dof+}\afterimp \right)^{\rm T}_{\bQ\beforeimp}   
&
 \left(\bV_\textnormal{dep+}\afterimp \right)^{\rm T}_{\bQ\beforeimp }
&
 \left(\bV_\textnormal{dof+}\afterimp \right)^{\rm T}_{\bQ\beforeimp}  
&
  \Zero_{n \times p}                                    
&
 \left(\bZ\afterimp\right)^{\rm T}_{\bQ\beforeimp} 
\\
\Zero_{(n-f) \times (n-f)}  & \Zero_{(n-f) \times f}  
& \Zero_{(n-f) \times (n-f)} &  \Zero_{(n-f) \times f} &
 0 _{(n-f) \times p} & 0 _{(-n-f) \times \nc} 
\\
\Zero_{f \times (n-f)}  & \Zero_{f \times f}  
& \left(\bV_\textnormal{dep+}\afterimp \right)^{\rm T}_{\bV_\textnormal{dof+}\beforeimp } & \left[ h_\bv\beforeimp \right]^{\rm T}
& 0 _{f \times p} & 0 _{f \times p} 
\\
D^{\rm T}  & \Zero_{p \times f} &
K^{\rm T}  & \left[ h_\brho\beforeimp \right]^{\rm T}&
\bI_{p \times p}  &0 _{p \times \nc}
\\
 0 _{\nc \times (n-f)} & 0 _{\nc \times f} &
0 _{\nc \times (n-f)}  &\Zero_{\nc \times f}
& 0 _{\nc \times p} & \bI_{\nc \times \nc}
\end{bmatrix}
\end{eqnarray}
From the adjoint sensitivity equation \eqref{eqn:adjoint-sensitivity-jump-matrix} the jumps in adjoint variables for ODE systems with constraints undergoing an elastic impact are: 
\begin{equation}
\begin{split}
\lambda^\bQ\beforeimp
= &
\left[
 \left(\bQ_\textnormal{dep+}\afterimp \right)^{\rm T}_{\bQ\beforeimp } \;
\left(\bQ_\textnormal{dof+}\afterimp \right)^{\rm T} _{\bQ\beforeimp} \right] \cdot  \lambda^\bQ\afterimp  +
 \left[ \left(\bV_\textnormal{dep+}\afterimp \right)^{\rm T}_{\bQ\beforeimp }  \;
  \left(\bV_\textnormal{dof+}\afterimp \right)^{\rm T}_{\bQ\beforeimp}  \right]
  \cdot  	\lambda^\bV\afterimp +
\left(\bZ\afterimp\right)_{\bQ\beforeimp}^{\rm T}  \cdot  {\lambda}^{\bZ}\afterimp
\\
\lambda^\bV\beforeimp =& \;
\begin{bmatrix}
\Zero_{(n-f) \times f} &  \Zero_{(n-f) \times f} \\
\left(\bV_\textnormal{dep+}\afterimp \right)^{\rm T}_{\bV_\textnormal{dof+}\beforeimp }  
 & \left[h_\bv\beforeimp \right]^{\rm T}
\end{bmatrix} 
\cdot \lambda^\bV\afterimp 
\\
{\lambda}^{\Gamma}\beforeimp =& \; \begin{bmatrix}
D^{\rm T}  & \Zero_{p \times f}  
\end{bmatrix} \cdot  \lambda^\bQ\afterimp
+
\begin{bmatrix}
K^{\rm T}  &
{h_{\brho}\beforeimp}^{\rm T}  
\end{bmatrix} \cdot  \lambda^\bV\afterimp +
{\lambda}^{\Gamma}\afterimp
\\
{\lambda}^{\bZ}\beforeimp = & \; {\lambda}^{\bZ}\afterimp
\end{split}
\end{equation}
\end{definition}

\begin{definition}[Jump in adjoint sensitivity for constrained systems with inelastic impact and a sudden change of constraints]
The transpose of the direct sensitivity jump matrix  $\mathsf{S}_\textnormal{eve}^\textrm{T}$ \eqref{eqn:adjoint-sensitivity-jump-matrix} associated with \eqref{eqn:Jump_direct_sensitivity_constraints2} is:
	\begin{eqnarray}
	\mathsf{S}_\textnormal{eve}^\textrm{T}
	=
	\begin{bmatrix}
	\left(\bQ_\textnormal{dep+}\afterimp \right)^{\rm T}_{\bQ\beforeimp }  & 
	\left(\bQ_\textnormal{dof+}\afterimp \right)^{\rm T}_{\bQ\beforeimp}  
	&
	\left[\fdaeimpvbq \afterimp \right]^{\rm T}  &  \left[\fdaeimplbq \afterimp \right]^{\rm T}
	&
	\Zero_{n \times p} &
	\left(\bZ\afterimp\right)^{\rm T}_{\bQ\beforeimp}
	\\
	\Zero_{n \times (n-f)}	& \Zero_{n \times f} &
	\left[\fdaeimpvbv \afterimp \right]^{\rm T}  &  \left[\fdaeimplbv \afterimp \right]^{\rm T}
	& 	\Zero_{n \times (p)} & 	\Zero_{n \times (\nc)}
	\\
	\Zero_{m \times (n-f)}	& \Zero_{m \times f}	&
	\Zero_{m \times n} &  \Zero_{m \times m}
	&	\Zero_{m \times (p)}&	\Zero_{m \times (\nc)}
	\\
	D^{\rm T}  & \Zero_{p \times f} & \left[\fdaeimpvbrho \afterimp \right]^{\rm T}
	&\left[\fdaeimplbrho \afterimp \right]^{\rm T} &	\bI_{p \times p}  &0 _{p \times \nc} 
	\\
	\Zero_{\nc \times (n-f)}	& \Zero_{\nc \times f}	&
	\Zero_{\nc \times n} &  \Zero_{\nc \times m}
	&	\Zero_{\nc \times (p)} & \bI_{\nc \times \nc}
	\end{bmatrix}
	\end{eqnarray}
Since the adjoints of the algebraic Lagrange variables are zero
(Remark \ref{adjoint of the algebraic variables}), the adjoint sensitivity equations \eqref{eqn:adjoint-sensitivity-jump-matrix} provide the jump equations for adjoint variables at the time of event: %
	\begin{equation}
	\begin{split}
	\lambda^\bQ\beforeimp
	= & \left[
	\left(\bQ_\textnormal{dep+}\afterimp \right)^{\rm T}_{\bQ\beforeimp }  \,
	\left(\bQ_\textnormal{dof+}\afterimp \right)_{\bQ\beforeimp}  \right]
	\cdot  \lambda^\bQ\afterimp  +
		\left[	\fdaeimpvbq \afterimp \right]^{\rm T}   \cdot  	\lambda^\bV\afterimp +
	\left(\bZ\afterimp\right)_{\bQ\beforeimp}^{\rm T}  \cdot  {\lambda}^{\bZ}\afterimp,
	\\
	\lambda^\bV\beforeimp =& \;
		\left[	\fdaeimpvbv \afterimp \right]^{\rm T} 
	\cdot \lambda^\bV\afterimp, 
	\\
	{\lambda}^{\Gamma}\beforeimp =& \; \begin{bmatrix}
	D^{\rm T}  & \Zero_{p \times f}  
	\end{bmatrix} \cdot  \lambda^\bQ\afterimp
	+
		\left[	\fdaeimpvbrho \afterimp \right]^{\rm T}   \cdot  \lambda^\bV\afterimp +
	{\lambda}^{\Gamma}\afterimp,
	\\
	{\lambda}^{\bZ}\beforeimp = & \; {\lambda}^{\bZ}\afterimp.
	\end{split}
	\end{equation}
\end{definition}

\begin{remark}[Sensitivities of the cost function]
Once the evolution of the sensitivities of the direct or adjoint sensitivities are computed, the sensitivity of the cost function with respect to parameters $d\,\psi/d\, \brho$ is obtained from equations \eqref{eqn:sensitivity-psi-direct} and \eqref{eqn:sensitivity-psi-adjoint}. Note that the evolution of the direct and adjoint sensitivities involve is piecewise continuous in time, with jumps occurring at each event.
\end{remark}
\section{Case study: sensitivity analysis of a five-bar mechanism}
\label{sec:numerics}
The five-bar mechanism, presented in Fig.~\ref{Fig_fivebar}, is used as a case study to validate the adjoint sensitivity method in computing the sensitivity of cost functions with respect to parameters for hybrid constrained dynamical systems.
The mechanism has two degrees of freedom, five revolute joints located at points A, 1, 2, 3, and B; the masses of each bars are $m_1=1 \; kg$, $m_2=1.5 \; kg$, $m_3=1.5 \; kg$, $m_4=1 \; kg$; the polar moments of inertia for each bars have uniformly distributed mass; the two springs have stiffness coefficients of $k_1=k_2=100 \; N/m$ and natural lengths of $L_{01}=2.2360 \; m$ and $L_{02}=2.0615 \; m$.
The state vector $\bq= \big[  \bq^{\rm T}_{1} ~ \bq^{\rm T}_{2} ~ \bq^{\rm T}_{3} \big]^{\rm T}$ includes the natural coordinates of the point 1, 2, and 3 of the mechanism. The coordinates  $\bq_{2}= \big[ x_{2} ~ y_{2}  \big]^{\rm T}$ are independent and defines  the DOF of the system, while the coordinates $\bq_{1}= \big[  x_{1} ~ y_{1}  \big]^{\rm T}$ with  $\bq_{3}= \big[ x_{3} ~ y_{3} \big]^{\rm T}$ are dependent. The constraint equations, used to solve for the dependent coordinates, are defined according to the fixed lengths between each set of points, as follows:
\begin{align}
	\label{eq:Constraints 5 bar}
    {\bPhi} &= \begin{bmatrix}
           \|{q_A-q_1}\|^2  -L_{A1}^2\\
			\|{q_2-q_1}\|^2  -L_{21}^2\\
           \|{q_3-q_2}\|^2  -L_{32}^2\\
			\|{q_B-q_3}\|^2  -L_{B3}^2\\
         \end{bmatrix}=0,
\end{align}
with the lengths $L_{A1}=L_{B3}=1.4142 \; m$ ; $L_{A1}=L_{B3}=1.8027 \; m $ and the ground points $\bq_{A}= \big[ \begin{array}{c c} -0.5 & 0 \end{array} \big]^{\rm T}$; $\bq_{B}= \big[ \begin{array}{c c} 0.5 & 0 \end{array} \big]^{\rm T}$. 
The study focuses on  point 2 of the five-bar mechanism. This points  hits the ground at -2.35 m along the vertical y axis which is detected by an the event function $r(\cdot)$ described in Eq.~\eqref{eq:event_Function}. At the time of event, the vertical velocity of point 2 jumps to its opposite value, while its horizontal velocity remains the same. 
The ODE forward system, the direct and adjoint sensitivity are simulated with a time span of five seconds. The residuals of the constraint equations, presented  in Fig.~\ref{fig2}, shows that the position and the velocity constraints are satisfied within an error of $10^{-6}$ and $10^{-5}$, respectively, which is satisfactory. 
\begin{figure} [H] 
\centering
	\begin{subfigure}[t]{.49\textwidth}
	\centering
	\raisebox{4mm}{\includegraphics[width=0.9\textwidth]{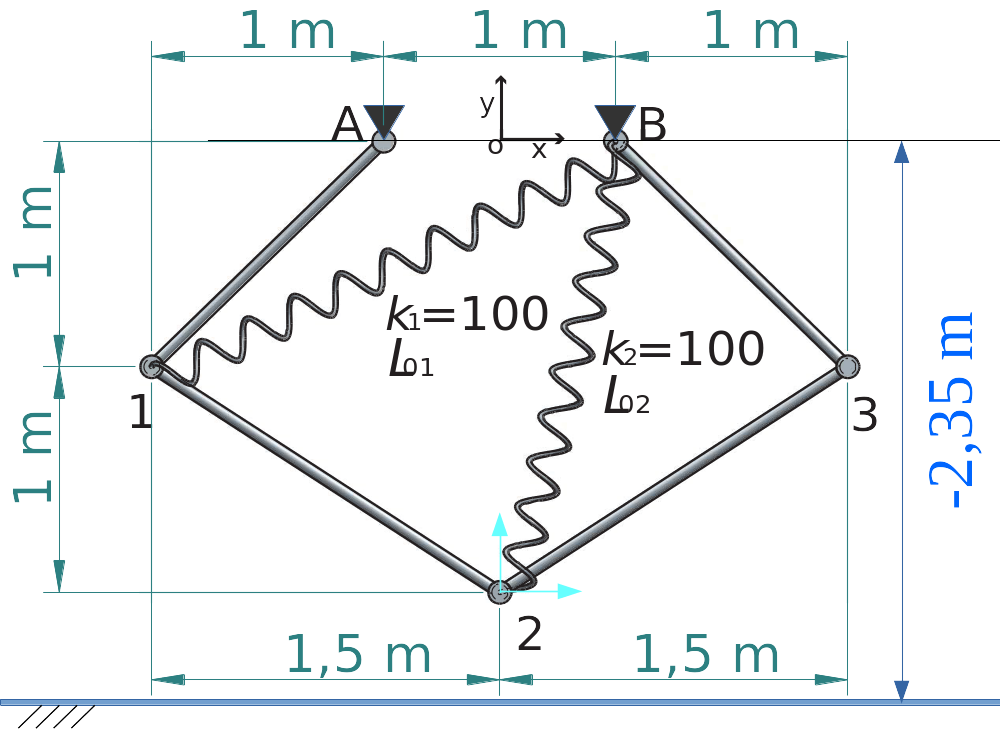}}
	\caption{Diagram of the five-bar mechanism.}
	\label{Fig_fivebar}
	\end{subfigure}
	\begin{subfigure}[t]{.49\textwidth}
	\centering
	\includegraphics[width=\textwidth]{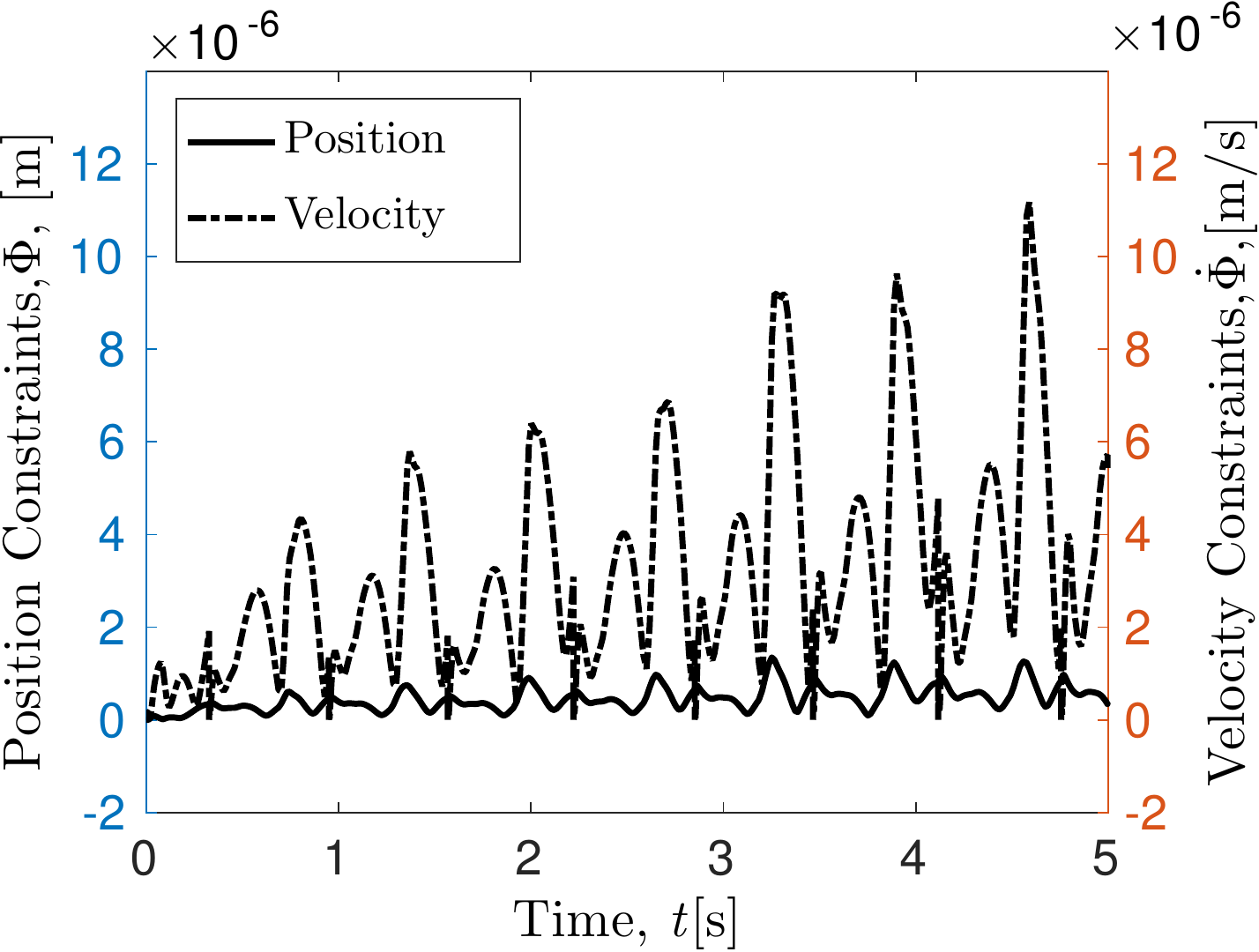}
	\captionsetup{margin=1cm}
	\caption{The position and the velocity constraint residuals for the five-bar mechanism.}
	\label{fig2}
	\end{subfigure}
\caption{Structure of the five-bar mechanism.}
\end{figure}
The trajectories of the vertical position and velocity of point 2 of the five-bar mechanism are presented in Fig.~\ref{fig3} and Fig.~\ref{fig4}, respectively. As expected, the point 2's vertical position bounces at -2.35m, and its vertical velocity jumps at each time of event with $\bv_2\afterimp=\bv_2\beforeimp$.

\begin{figure} [H] 
	\centering
	\begin{subfigure}{.49\textwidth}
		\centering
		\includegraphics[width=\textwidth]{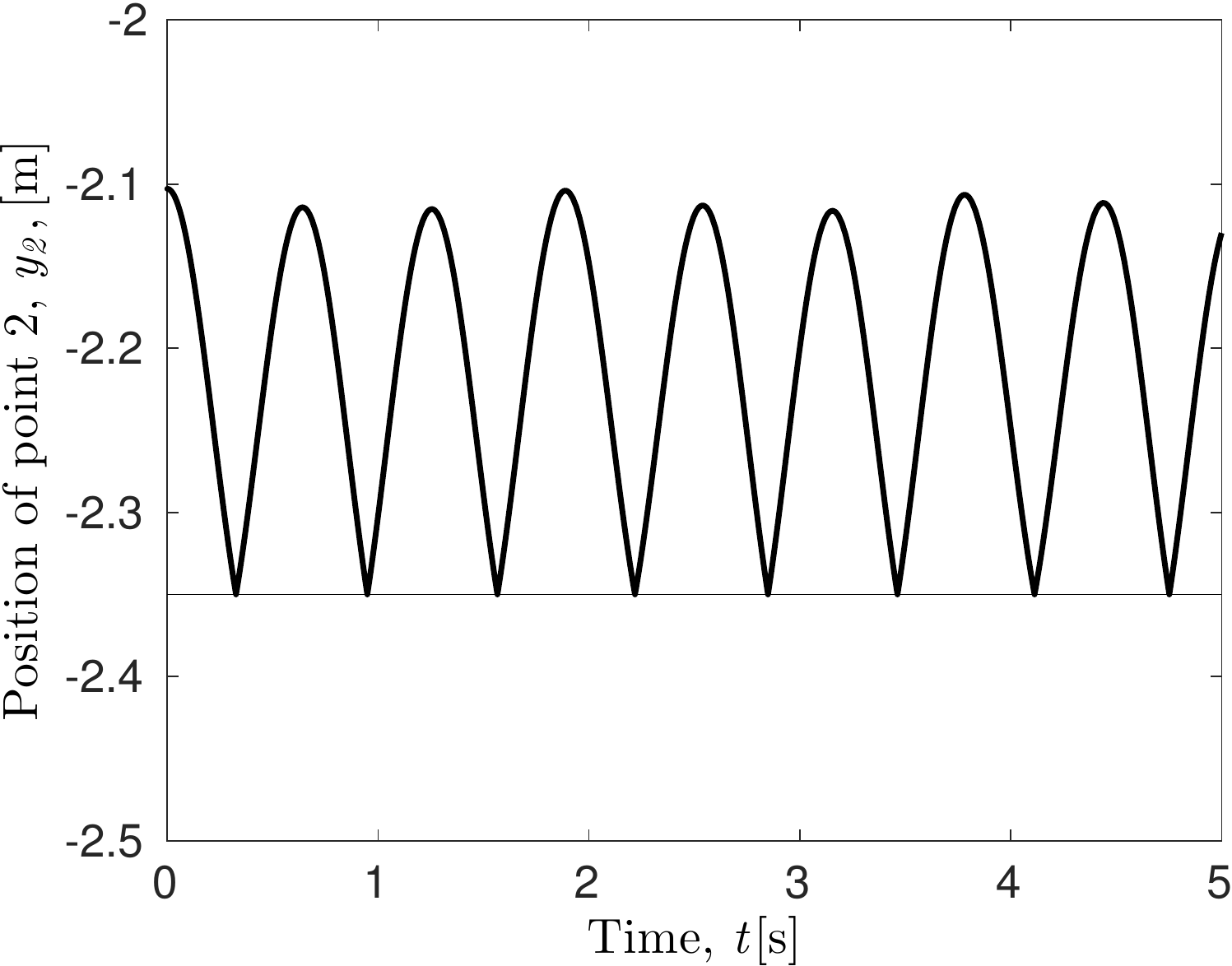}
		\captionsetup{margin=1cm}
		\caption{The position of point 2.}
		\label{fig3}
	\end{subfigure}
	\begin{subfigure}{.49\textwidth}
		\centering
		\includegraphics[width=\textwidth]{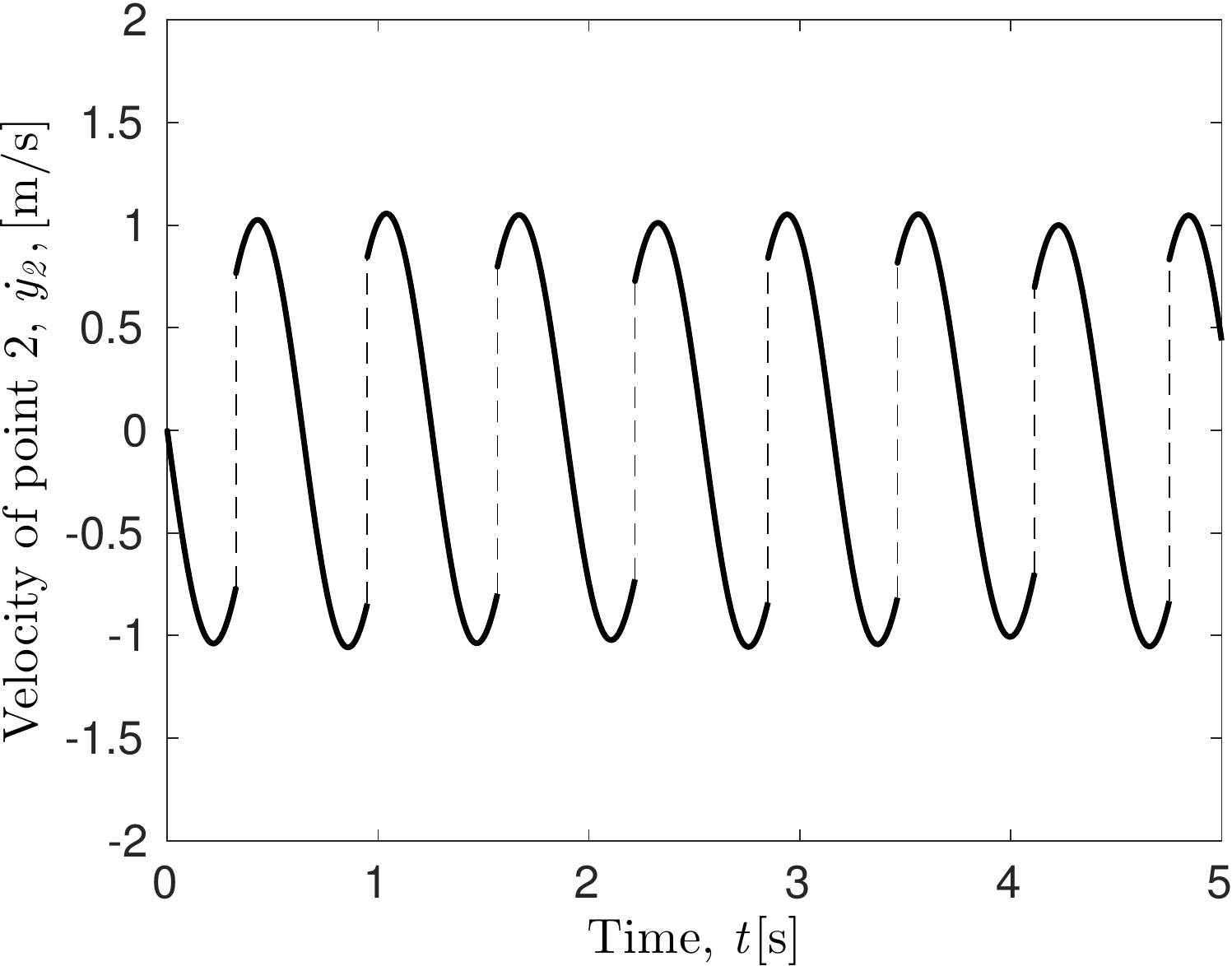}
		\captionsetup{margin=1cm}
		\caption{The velocity of point 2.}
		\label{fig4}
	\end{subfigure}
	\caption{The state variables of point 2 of the five-bar mechanism.}
	\label{sfig4}
\end{figure}

The trajectory of the quadrature variable $z(t) =\dytwo$  of the five-bar mechanism and its sensitivity are shown in Fig.~\ref{fig5} and Fig.~\ref{fig6}, respectively. The same analysis is presented in Fig.~\ref{fig7} and Fig.~\ref{fig8} for the quadrature variable $\bz(t) =\ddytwo$.
The direct sensitivity is represented by the continuous line, while the central finite difference sensitivity is represented by the dashed line. Both solutions were solved forward in time. The adjoint sensitivity is presented as well, and was solved backwards in time. As presented in our previous paper, the direct differentiation method to compute the sensitivity of the cost function with discontinuities in the velocity state variables of the mechanism is validated. The validation comes to the fact that the trajectories of the sensitivity of the quadrature variable $\Z$ exactly matches the trajectory of  numerical sensitivity computed with a finite difference method. 
One main conclusion of our previous paper was to state that our proposed direct sensitivity method in computing the sensitivity of the cost function with discontinuities in the velocity state variables was more robust than the numerical method. Indeed, the direct method  accurately determines the jump in the sensitivities  and their trajectories. This after each event, without any delta-like jumps in magnitude $1/\eps$ that occurs in the numerical method at each time of event.
This validated direct sensitivity method is now compared to the proposed adjoint method in computing the sensitivity of the cost function with discontinuities in the velocity state variables. The results  presented in  Fig.~\ref{fig5} and Fig.~\ref{fig7} show that the adjoint and direct method exactly converge to the same sensitivity cost number with a difference of less than 0.01 $\%$. This convergence in both methods validates the adjoint sensitivity method in computing the sensitivity of the cost function with discontinuities in the trajectories.
\begin{figure} [H] 
	\centering
	\begin{subfigure}{.48\textwidth}
	\centering
	\includegraphics[width=\textwidth]{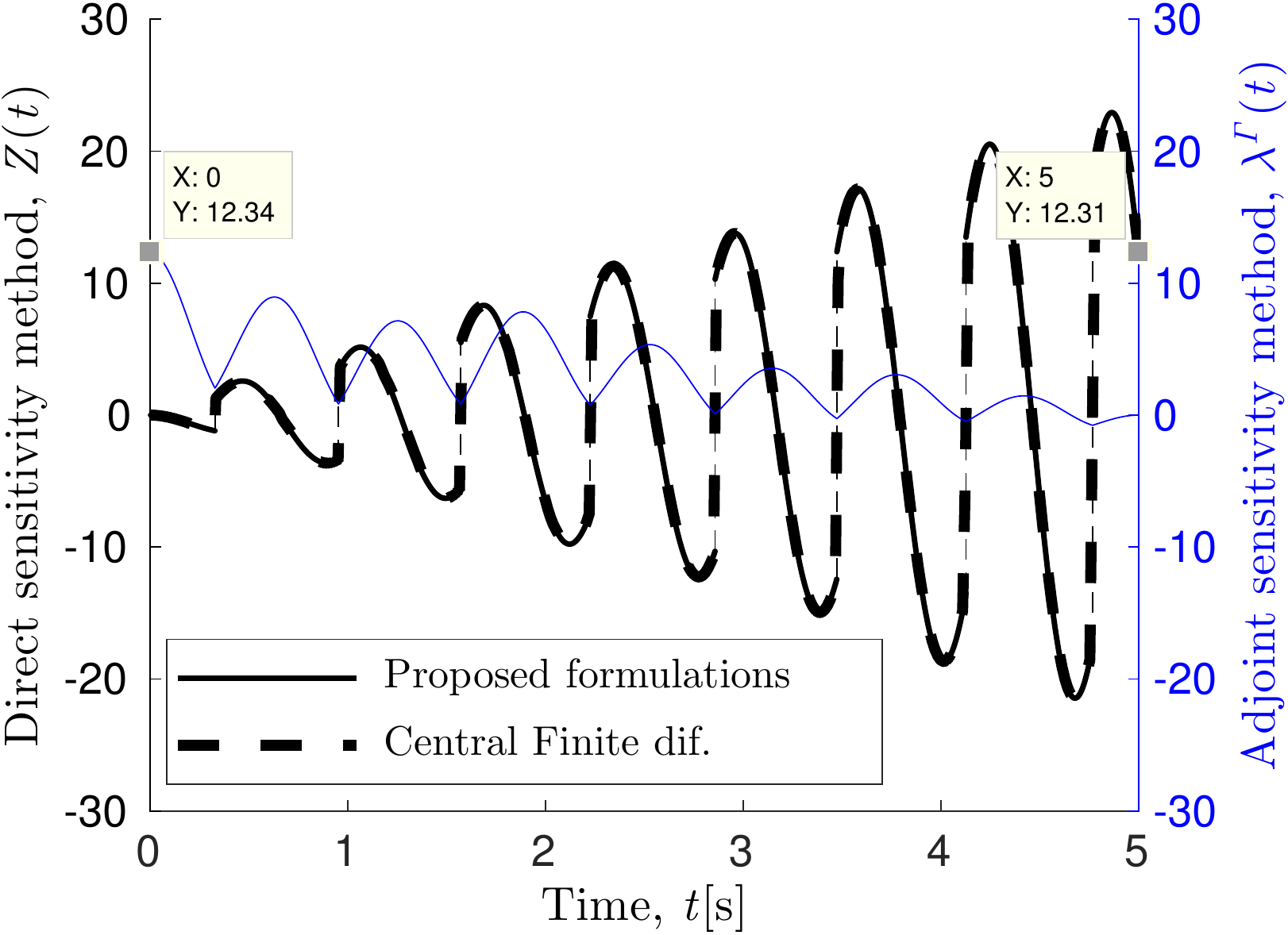}
	\captionsetup{margin=1cm}
	\caption{Direct and adjoint sensitivities.}
	\label{fig5}
	\end{subfigure}
	$\qquad$
	\begin{subfigure}{.46\textwidth}
		\centering
		\includegraphics[width=\textwidth]{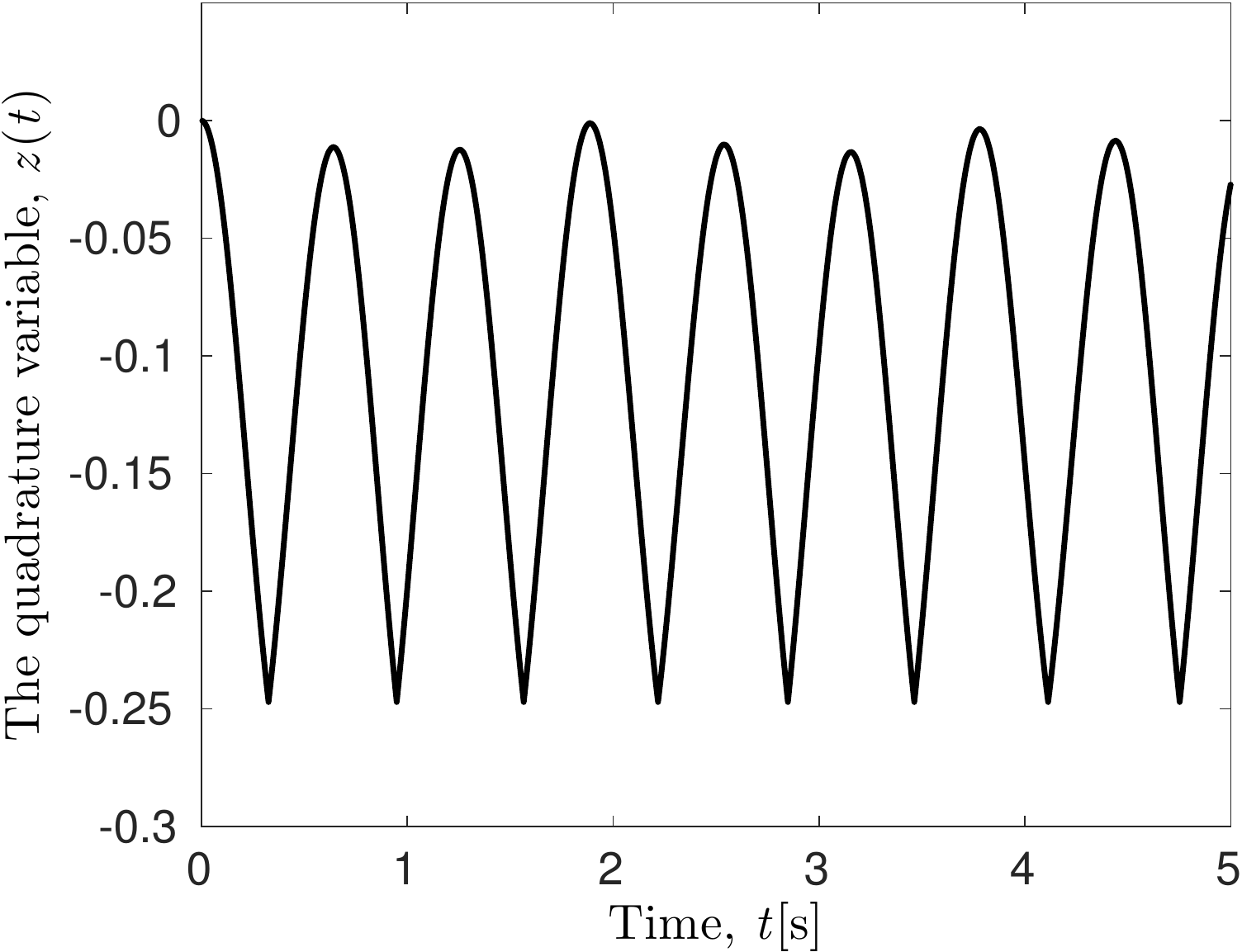}
		\captionsetup{margin=1cm}
		\caption{The quadrature variable $\bz(t)$.}
		\label{fig6}
	\end{subfigure}
	\caption{Sensitivity analysis of the five-bar mechanism with  $\bz(t) =\dytwo$ .}
\end{figure}
\begin{figure} [H] 
	\centering
	\begin{subfigure}{.48\textwidth}
		\centering
		\includegraphics[width=\textwidth]{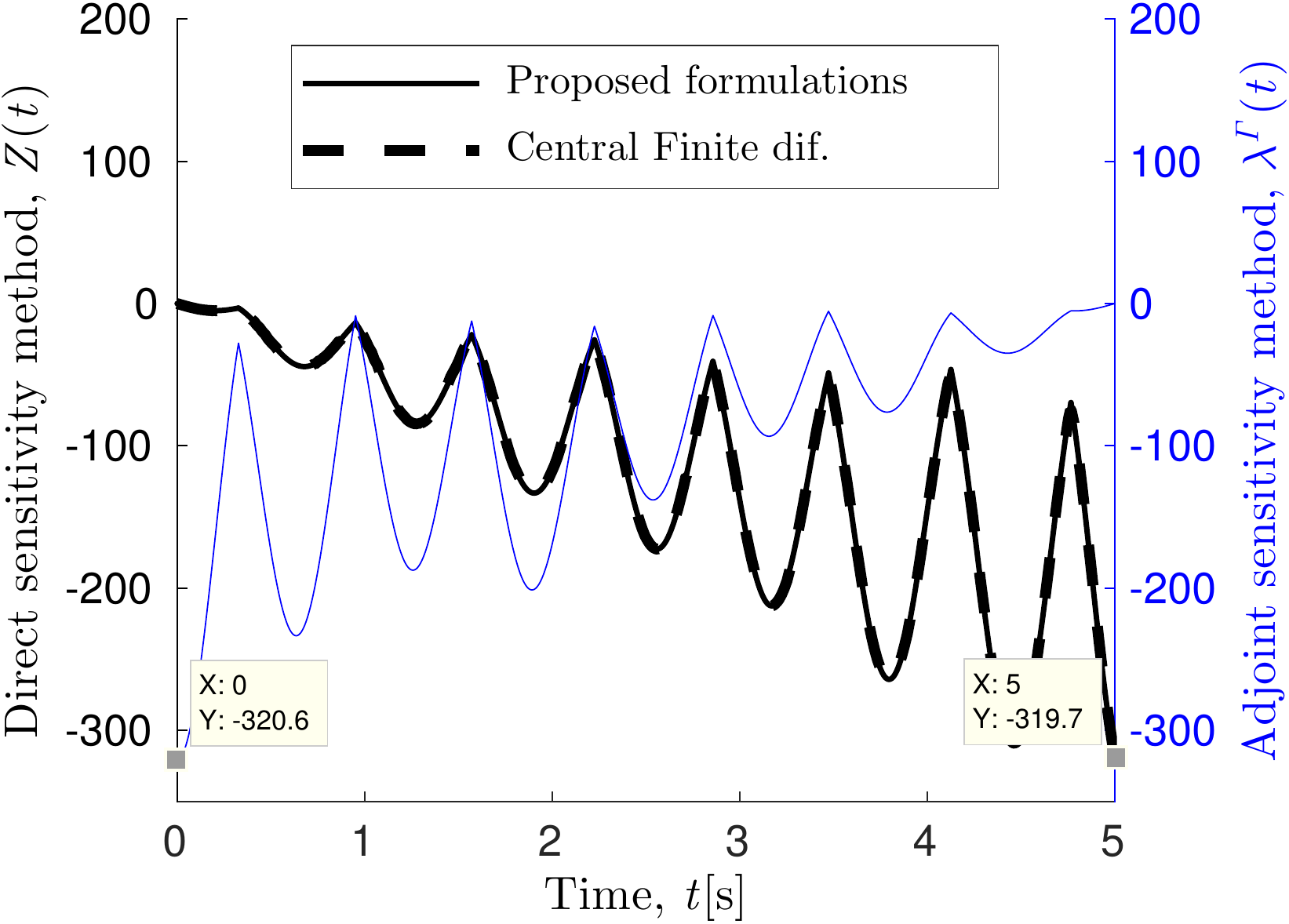}
		\captionsetup{margin=1cm}
		\caption{Direct and adjoint sensitivities.}
		\label{fig7}
	\end{subfigure}
	$\qquad$
	\begin{subfigure}{.46\textwidth}
		\centering
		\includegraphics[width=\textwidth]{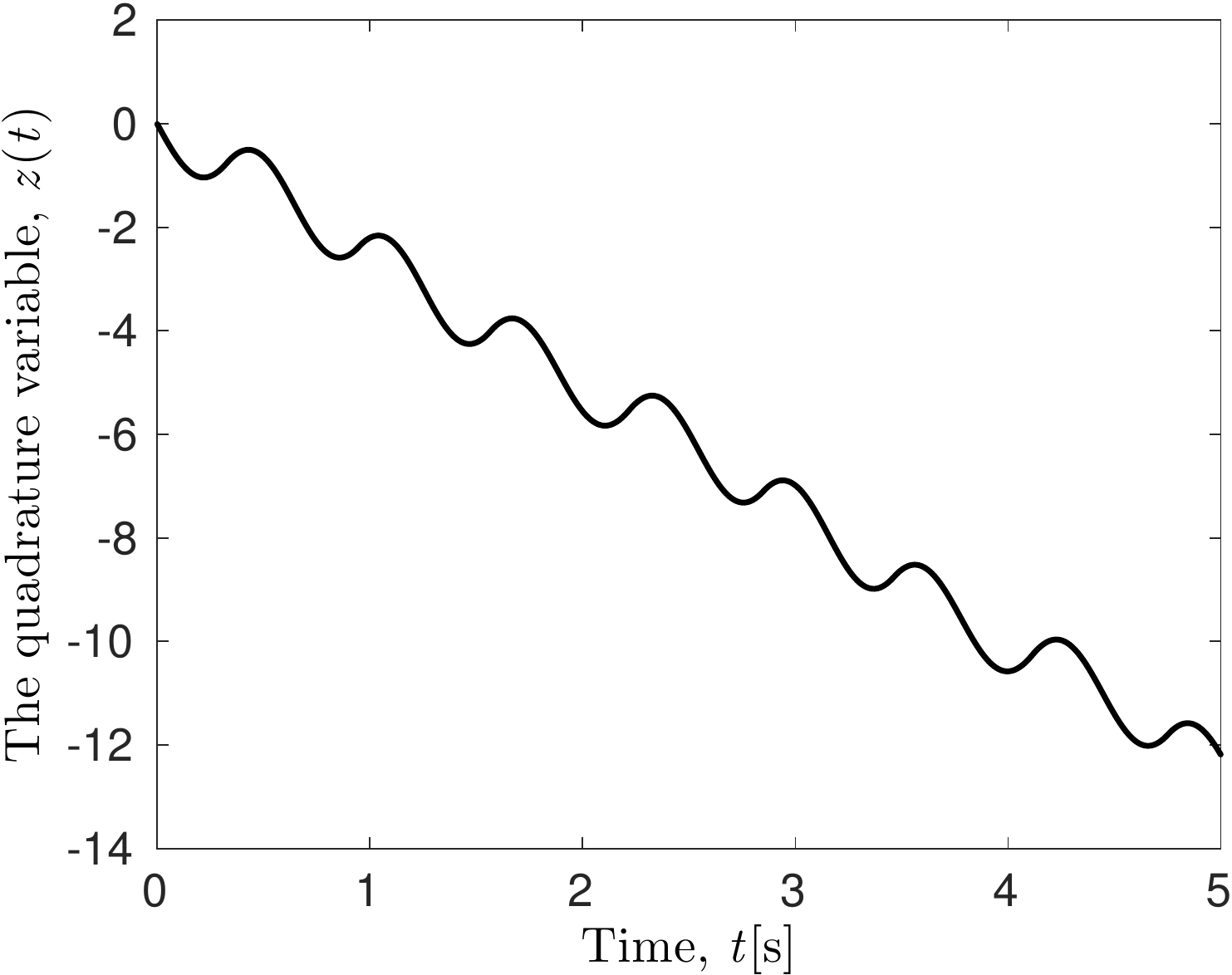}
		\captionsetup{margin=1cm}
		\caption{The quadrature variable $\bz(t)$. }
		\label{fig8}
	\end{subfigure}
	\caption{Sensitivity analysis of the five-bar mechanism with  $\bz(t) =\ddytwo$ .}
\end{figure}
Note that  $\zddytwo$ does not completely match the trajectory of the velocity of point 2 in Fig.~\ref{fig4}. Indeed, the point2's velocity jumps at the time of event, while the quadrature variable does not. The quadrature variable evaluates the integral of the acceleration of point 2 only.
\newline
The same analysis is provided with the  quadrature variable $\bz(t) =\int_{t_0}^{t} {\ddot{y}_2(\tau)}^2  +{\dot{y}_2(\tau)}^2 \ {\rm d\tau}$ in Fig.~\ref{fig9} and Fig.~\ref{fig10}. The adjoint and direct method  converge to the same sensitivity cost number with a difference of less than 0.01 $\%$.

\begin{figure} [H] 
	\centering
	\begin{subfigure}{.49\textwidth}
		\centering
		\includegraphics[width=\textwidth]{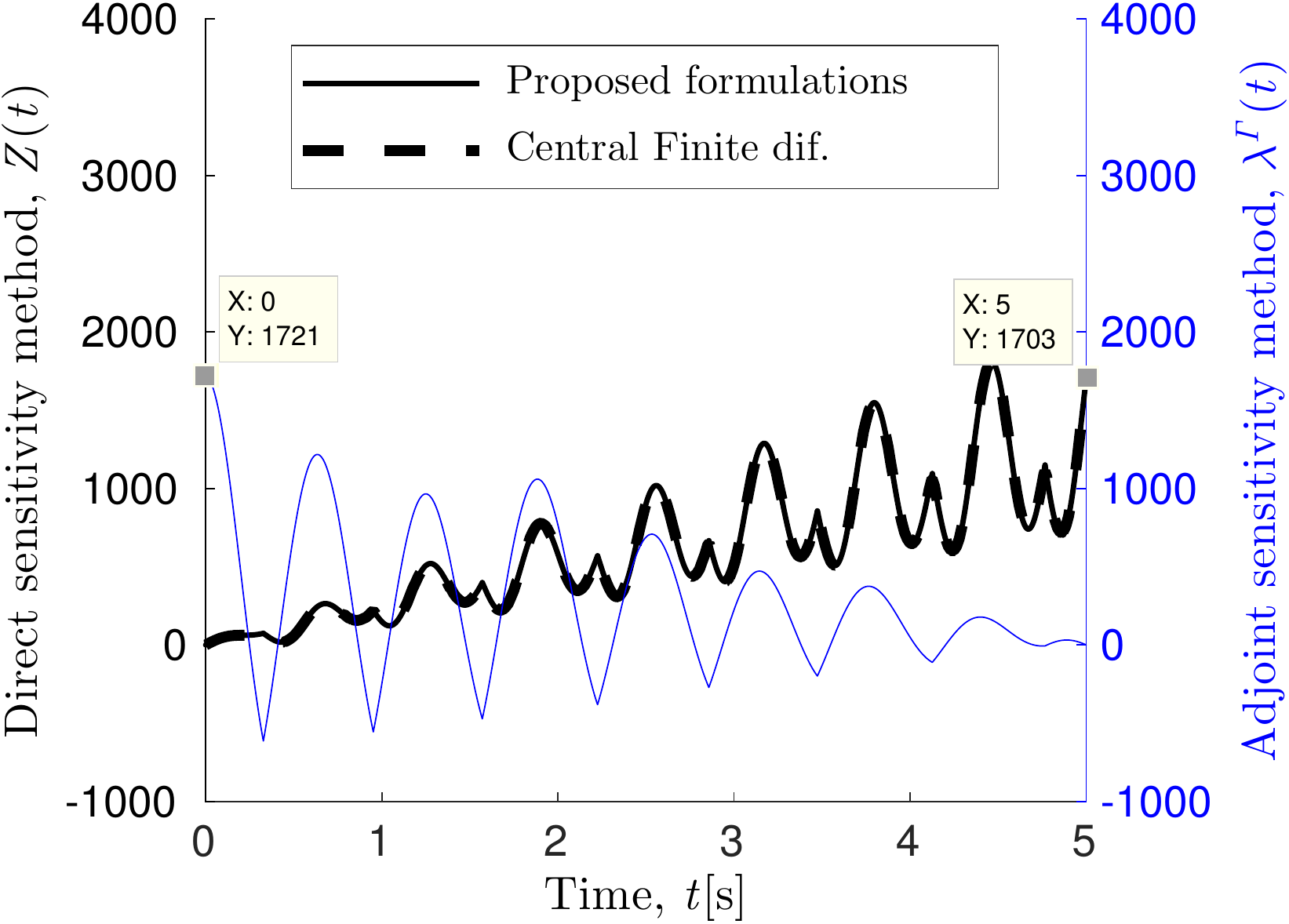}
		\captionsetup{margin=1cm}
		\caption{Direct and adjoint sensitivities.}
		\label{fig9}
	\end{subfigure}
	$\qquad$
	\begin{subfigure}{.45\textwidth}
		\centering
		\includegraphics[width=\textwidth]{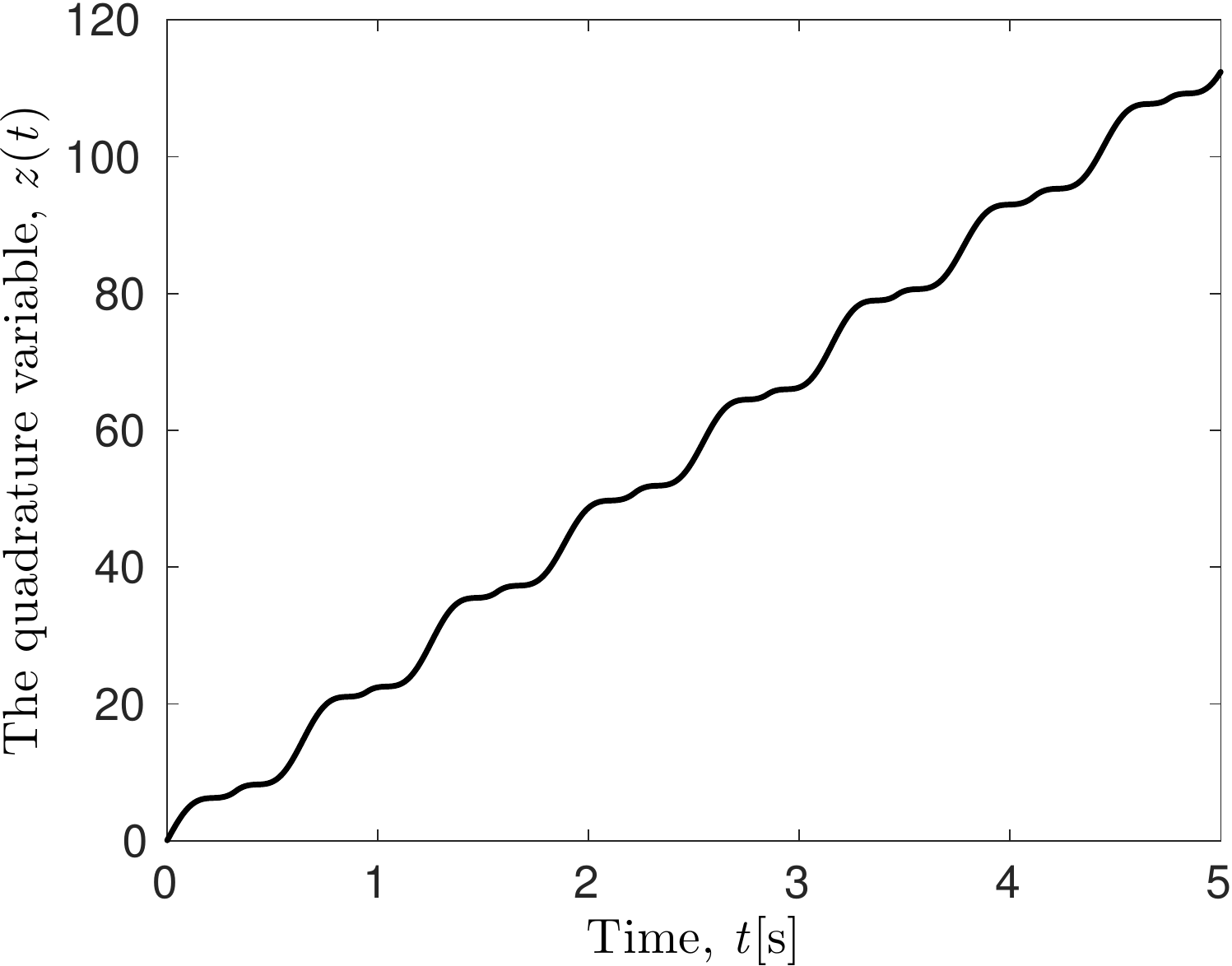}
		\captionsetup{margin=1cm}
		\caption{The quadrature variable $\bz(t)$.}
		\label{fig10}
	\end{subfigure}
	\caption{Sensitivity analysis of the five-bar mechanism with  $\bz(t) =\int_{t_0}^{t} {\ddot{y}_2(\tau)}^2  +{\dot{y}_2(\tau)}^2 \ {\rm d\tau}$  .}
\end{figure}
\section{Conclusions}
\label{sec:conclusions}
Gradient based algorithms are widely used in computational engineering problems such as design and control optimization, implicit time integration methods, and deep learning. Sensitivity analysis plays a key role in this type of algorithms as it provides the necessary derivative information. In the context of dynamical systems governed by ordinary or differential algebraic equations, sensitivity analysis computes the derivatives of general cost functions that depend on the system solution with respect to parameters or initial conditions. 

Direct and adjoint sensitivity analyses for continuous multibody dynamic systems  have been discussed in the literature \cite{Sandu_2014_sensitivity_ODE_multibody,Sandu_2014_MBSVT,Zhu_2014_PhD,Sandu2015dynamic,Sandu_2017_vehicleOptimization}.  Our earlier work has extended the direct sensitivity analysis to hybrid multibody dynamic systems systems that are subject to events such as impacts or sudden changes in constraints \cite{Corner2017}. 

This paper extends the mathematical framework to compute adjoint sensitivities for hybrid multibody dynamic systems modeled by ordinary differential equations and by index-1 differential algebraic equations. A very general formulation of the cost functions is used. For the hybrid systems considered herein discontinuities in the forward trajectories appear at time moments triggered by an event. Jump conditions for adjoint sensitivity variables are provided for mechanical systems with and without constraints. These jump conditions handle the change in the sensitivities caused by the non-smoothness of the forward trajectories at a finite number of events.

We validate the mathematical framework for adjoint sensitivities for hybrid dynamical systems on the study of a five-bar mechanism with non-smooth contacts. The direct and adjoint sensitivities computed by the proposed mathematical framework are validated against numerical sensitivities calculated by real finite differences. The results of this study show that all the alternative analyses provide the same sensitivities of the general cost function with respect to model parameters, within an error of 0.01$\%$.

Future work will extend the mathematical framework to calculate adjoint sensitivities of hybrid mechanical  systems with respect to actuation functions. These sensitivities will allow to solve optimal control problems for hybrid systems.

\appendix
\renewcommand*{\thesection}{\Alph{section}}
\section{Calculation of partial derivatives used in sensitivity analyses}
\label{appendix:derivatives-calculation}

\begin{remark}
	The expressions $\feomq$, $\feomv$, and $\feomrhoi$ denote the partial derivatives of $\feom$ with respect to the subscripted variables.
	The partial derivatives $\partial \feom/\partial \zeta$ are obtained by differentiating  $ \feom$ with respect to $\zeta \in \{ \bq,\bv,\brho \}$:
	\begin{equation}
	\label{eq:DIF-EOM}
	\frac{\partial \feom}{\partial \zeta} 
	=\frac{\partial (\Mass^{-1}  \, \Force)}{\partial \zeta} 
	= -\Mass^{-1}\, \Mass_\zeta\, \Mass^{-1}\,\Force
	+ \Mass^{-1}\,\Force_\zeta 
	= \Mass^{-1}\,\left( \Force_\zeta - \Mass_\zeta\,\feom \right)
	= \Mass^{-1}\,\left( \Force_\zeta - \Mass_\zeta\,\dbv \right).
	\end{equation}
	
\end{remark}

\begin{remark}
	The expressions $\tilde{\bg}_q$, $\tilde{\bg}_v$, and $\tilde{\bg}_{\brho_i}$ denote the partial derivatives of $ \tilde{\bg}$ with respect to the subscripted variables.
	The partial derivatives $\partial  \tilde{\bg}/\partial \zeta$ are obtained by differentiating \eqref{eq:EOM-ODE} with respect to $\zeta \in \{ \bq,\bv,\brho \}$:
	\begin{align}
    \tilde{\bg}_{\zeta} &=
	g_{\zeta} +  g_\dbv\,\feomzeta + g_{\tilde{\bu}} \, \tilde{\bu}_{\zeta} \\ \nonumber &=
	g_{\zeta} +  g_\dbv\,\feomzeta + g_{{\bu}} \, {\bu}_{\zeta} \, + g_{{\bu}} \, u_\dbv\,\feomzeta,
	\end{align}
	which leads to:
	\begin{align}
	 \bigl[\ \tilde{\bg}_q \ \bQ_i +
	\tilde{\bg}_v \ \bV_i +
	\tilde{\bg}_{\rho_i} \  \bigr]_{i=1,\dots,p} & =
	 \bigl[ \big(g_\bq +  g_\dbv\,\feomq + g_{{\bu}} \, {\bu}_{\bq} \, + g_{{\bu}} \, u_\dbv\,\feomq \big)\cdot \bQ_{i}   \nonumber
	\\ & \quad +  \big(g_\bv +  g_\dbv\,\feomv + g_{{\bu}} \, {\bu}_{\bv} \, + g_{{\bu}} \, u_\dbv\,\feomv \big)\cdot \bV_{i} 
	\\  \nonumber & \quad + g_{\brho_i} +  g_\dbv\cdot\feomrhoi + g_{{\bu}} \, {\bu}_{\brho} \, + g_{{\bu}} \, u_\dbv\,\feomrhoi \bigr]_{i=1,\dots,p}.
	\end{align}
\end{remark}

\begin{remark}
Similarly, the expressions $\tilde{w}_q$, $\tilde{w}_v$, and $\tilde{w}_{\brho_i}$ denote the partial derivatives of $ \tilde{w}$ with respect to the subscripted variables.
The partial derivatives $\partial  \tilde{w}/\partial \zeta$ are obtained by differentiating  $w$ with respect to $\zeta \in \{ \bq,\bv,\brho \}$:
\begin{align}
\tilde{w}_{\zeta} &=
w_{\zeta} +  w_\dbv\,\feomzeta + w_{\tilde{\bu}} \, \tilde{\bu}_{\zeta} \\ \nonumber &=
w_{\zeta} +  w_\dbv\,\feomzeta + w_{{\bu}} \, {\bu}_{\zeta} \, + w_{{\bu}} \, w_\dbv\,\feomzeta.
\end{align}
\end{remark} 
\section{Adjoint of the algebraic Lagrangian coefficient}
\label{appendix:Adjoint of the algebraic Lagrangian coefficient}
%
Methods to compute the adjoint of an index-1 DAE available in the literature \cite{Dopico2014,Ballard2000,Schaffer2005} use the following approach. 
Define the Lagrangian using the multipliers $\mu^Q , \mu^V , \mu^\Gamma$ that correspond to the constraints posed by the index-1 DAE equations \eqref{eq:EOM-DAE-index1}:
\begin{equation}
\label{eq:adjoint variables that constraints the DAE equations}
\begin{bmatrix}
\mu^Q \\ \mu^V \\ \mu^\Gamma
\end{bmatrix}^T
\cdot
\left(
\begin{bmatrix}
\bI & \bzero & \bzero \\
\bzero & {\Mass}\left(t,\bq,\brho\right) & \dPhidq^{\rm T}\left(t,\bq,\brho\right) \\
\bzero & \dPhidq\left(t,\bq,\brho\right) & \bzero
\end{bmatrix}
\cdot
\begin{bmatrix}
\dbq \\ \dbv \\ \mu
\end{bmatrix}
-
\begin{bmatrix}
\bv \\
{\Force} \left(t,\bq,\bv,\brho\right)  \\
\Faccel\left(t,\bq,\bv,\brho\right)
\end{bmatrix}
\right).
\end{equation}
We rearrange equation  \eqref{eq:adjoint variables that constraints the DAE equations} as follows: 
\begin{equation}
\left(
\begin{bmatrix}
\mu^Q \\ \mu^V \\ \mu^\Gamma
\end{bmatrix}
\cdot
\begin{bmatrix}
\bI & \bzero & \bzero \\
\bzero & {\Mass}\left(t,\bq,\brho\right) & \dPhidq^{\rm T}\left(t,\bq,\brho\right) \\
\bzero & \dPhidq\left(t,\bq,\brho\right) & \bzero
\end{bmatrix}
\right)
\cdot
\left(
\begin{bmatrix}
\dbq \\ \dbv \\ \mu
\end{bmatrix}
-
\begin{bmatrix}
\bI & \bzero & \bzero \\
\bzero & {\Mass}\left(t,\bq,\brho\right) & \dPhidq^{\rm T}\left(t,\bq,\brho\right) \\
\bzero & \dPhidq\left(t,\bq,\brho\right) & \bzero
\end{bmatrix}^{-1}
\cdot
\begin{bmatrix}
\bv \\
{\Force} \left(t,\bq,\bv,\brho\right)  \\
\Faccel\left(t,\bq,\bv,\brho\right)
\end{bmatrix}
\right)
\end{equation}

\begin{equation}
=\left(
\begin{bmatrix}
\mu^Q \\ \mu^V \\ \mu^\Gamma
\end{bmatrix}^T
\cdot
\begin{bmatrix}
\bI & \bzero & \bzero \\
\bzero & {\Mass}\left(t,\bq,\brho\right) & \dPhidq^{\rm T}\left(t,\bq,\brho\right) \\
\bzero & \dPhidq\left(t,\bq,\brho\right) & \bzero
\end{bmatrix}
\right)
\cdot
\left(
\begin{bmatrix}
\dbq \\ \dbv \\ \mu
\end{bmatrix}
-
\begin{bmatrix}
V \\ \fdaedv \\ \fdaelb
\end{bmatrix}
\right)
\end{equation}

\begin{equation}
=\begin{bmatrix}
\lambda^Q \\ \lambda^V \\ \lambda^\Gamma
\end{bmatrix}^T
\cdot
\left(
\begin{bmatrix}
\dbq \\ \dbv \\ \mu
\end{bmatrix}
-
\begin{bmatrix}
V \\ \fdaedv \\ \fdaelb
\end{bmatrix}
\right).
\end{equation}
The adjoint variables $\lambda^Q , \lambda^V , \lambda^\Gamma$ defined in this paper, and the adjoint variables $\mu^Q , \mu^V , \mu^\Gamma$ used in the literature \eqref{eq:adjoint variables that constraints the DAE equations}, are related by the following matrix multiplication:
\[
\begin{bmatrix}
\lambda^Q \\ \lambda^V \\ \lambda^\Gamma
\end{bmatrix}
=
\begin{bmatrix}
\bI & \bzero & \bzero \\
\bzero & {\Mass}\left(t,\bq,\brho\right) & \dPhidq^{\rm T}\left(t,\bq,\brho\right) \\
\bzero & \dPhidq\left(t,\bq,\brho\right) & \bzero
\end{bmatrix}
\cdot
\begin{bmatrix}
\mu^Q \\ \mu^V \\ \mu^\Gamma
\end{bmatrix}.
\]
The adjoint DAE equations and boundary conditions in the ``$\mu$ formulation''  \cite{Dopico2014,Ballard2000,Schaffer2005}
can be derived from the equations and boundary conditions in the ``$\lambda$ formulation'' discussed in this paper, and vice-versa.

\clearpage
\begin{spacing}{0.25}
	
\pagestyle{empty}
\printnomenclature[3.25 cm]
\end{spacing}
\clearpage
\doublespacing

\section*{Acknowledgments}
\label{sec:acknowledgments}
This project has been partially funded by the European Union Horizon 2020 Framework Program, Marie Skłodowska Curie actions, under grant agreement no. 645736, Project EVE, Innovative Engineering of Ground Vehicles with integrated Active Chassis Systems.
It was also supported in part by awards NSF DMS--1419003, NSF CCF--1613905, NSF ACI--1709727, AFOSR DDDAS 15RT1037, by the Computational Science Laboratory.

\section*{References}
\label{sec:refs}

\bibliographystyle{elsarticle-num} 
\bibliography{biblio}

\begin{thebibliography}{10}
\expandafter\ifx\csname url\endcsname\relax
  \def\url#1{\texttt{#1}}\fi
\expandafter\ifx\csname urlprefix\endcsname\relax\def\urlprefix{URL }\fi
\expandafter\ifx\csname href\endcsname\relax
  \def\href#1#2{#2} \def\path#1{#1}\fi

\bibitem{Sandu2015dynamic}
Y.~Zhu, D.~Dopico, C.~Sandu, A.~Sandu, Dynamic response optimization of complex
  multibody systems in a penalty formulation using adjoint sensitivity, ASME.
  J. Comput. Nonlinear Dynam. 10~(3) (2015) 031009.
\newblock \href {http://dx.doi.org/10.1115/1.4029601}
  {\path{doi:10.1115/1.4029601}}.

\bibitem{chang2016design}
K.-H. Chang, Chapter 18 - Structural Design Sensitivity Analysis, Academic
  Press, Boston, 2015.
\newblock \href
  {http://dx.doi.org/http://dx.doi.org/10.1016/B978-0-12-382038-9.00018-1}
  {\path{doi:http://dx.doi.org/10.1016/B978-0-12-382038-9.00018-1}}.

\bibitem{sandu2003direct}
A.~Sandu, D.~N. Daescu, G.~R. Carmichael, Direct and adjoint sensitivity
  analysis of chemical kinetic systems with kpp: Part i---theory and software
  tools, Atmospheric Environment 37~(36) (2003) 5083 -- 5096.
\newblock \href
  {http://dx.doi.org/http://dx.doi.org/10.1016/j.atmosenv.2003.08.019}
  {\path{doi:http://dx.doi.org/10.1016/j.atmosenv.2003.08.019}}.

\bibitem{Barton1998}
P.~I. Barton, R.~J. Allgor, W.~F. Feehery, S.~Gal{\'a}n,
  \href{http://dx.doi.org/10.1021/ie970738y}{Dynamic optimization in a
  discontinuous world}, Industrial \& Engineering Chemistry Research 37~(3)
  (1998) 966--981.
\newblock \href {http://arxiv.org/abs/http://dx.doi.org/10.1021/ie970738y}
  {\path{arXiv:http://dx.doi.org/10.1021/ie970738y}}, \href
  {http://dx.doi.org/10.1021/ie970738y} {\path{doi:10.1021/ie970738y}}.
\newline\urlprefix\url{http://dx.doi.org/10.1021/ie970738y}

\bibitem{Barton1999}
S.~Gal{\'a}n, W.~F. Feehery, P.~I. Barton,
  \href{http://www.sciencedirect.com/science/article/pii/S0168927498001251}{Parametric
  sensitivity functions for hybrid discrete/continuous systems}, Applied
  Numerical Mathematics 31~(1) (1999) 17 -- 47.
\newblock \href
  {http://dx.doi.org/http://dx.doi.org/10.1016/S0168-9274(98)00125-1}
  {\path{doi:http://dx.doi.org/10.1016/S0168-9274(98)00125-1}}.
\newline\urlprefix\url{http://www.sciencedirect.com/science/article/pii/S0168927498001251}

\bibitem{barton2002modeling}
P.~I. Barton, C.~K. Lee, Modeling, simulation, sensitivity analysis, and
  optimization of hybrid systems, ACM Trans. Model. Comput. Simul. 12~(4)
  (2002) 256--289.
\newblock \href {http://dx.doi.org/10.1145/643120.643122}
  {\path{doi:10.1145/643120.643122}}.

\bibitem{tolsma2002hidden}
J.~E. Tolsma, P.~I. Barton, Hidden discontinuities and parametric sensitivity
  calculations, SIAM Journal on Scientific Computing 23~(6) (2002) 1861--1874.
\newblock \href {http://dx.doi.org/10.1137/S106482750037281X}
  {\path{doi:10.1137/S106482750037281X}}.

\bibitem{rozenvasser1967general}
E.~Rozenvasser, General sensitivity equations of discontinuous systems,
  Automatika i telemekhanika 3 (1967) 52--56.

\bibitem{saccon2014sensitivity}
A.~Saccon, N.~van~de Wouw, H.~Nijmeijer, Sensitivity analysis of hybrid systems
  with state jumps with application to trajectory tracking, 53rd IEEE
  Conference on Decision and Control (2014) 3065--3070\href
  {http://dx.doi.org/10.1109/CDC.2014.7039861}
  {\path{doi:10.1109/CDC.2014.7039861}}.

\bibitem{hiskens2006sensitivity}
I.~A. Hiskens, J.~Alseddiqui, Sensitivity, approximation, and uncertainty in
  power system dynamic simulation, IEEE Transactions on Power Systems 21~(4)
  (2006) 1808--1820.
\newblock \href {http://dx.doi.org/10.1109/TPWRS.2006.882460}
  {\path{doi:10.1109/TPWRS.2006.882460}}.

\bibitem{hiskens2000trajectory}
I.~A. Hiskens, M.~A. Pai, Trajectory sensitivity analysis of hybrid systems,
  IEEE Transactions on Circuits and Systems I: Fundamental Theory and
  Applications 47~(2) (2000) 204--220.
\newblock \href {http://dx.doi.org/10.1109/81.828574}
  {\path{doi:10.1109/81.828574}}.

\bibitem{taringoo2010geometry}
F.~Taringoo, P.~Caines, On the geometry of switching manifolds for autonomous
  hybrid systems, IFAC Proceedings Volumes 43~(12) (2010) 35 -- 40.
\newblock \href
  {http://dx.doi.org/http://dx.doi.org/10.3182/20100830-3-DE-4013.00008}
  {\path{doi:http://dx.doi.org/10.3182/20100830-3-DE-4013.00008}}.

\bibitem{backer1966jump}
W.~Backer, Jump conditions for sensitivity coefficients, Sensitivity methods in
  control theory (Symp. Dubrovnik 1964; L. Radanovi{\'c}, ed.) (1964) pp.
  168--175.

\bibitem{stewart2010optimal}
D.~E. Stewart, M.~Anitescu,
  \href{http://dx.doi.org/10.1007/s00211-009-0262-2}{Optimal control of systems
  with discontinuous differential equations}, Numerische Mathematik 114~(4)
  (2010) 653--695.
\newblock \href {http://dx.doi.org/10.1007/s00211-009-0262-2}
  {\path{doi:10.1007/s00211-009-0262-2}}.
\newline\urlprefix\url{http://dx.doi.org/10.1007/s00211-009-0262-2}

\bibitem{Taringoo2009}
F.~Taringoo, P.~E. Caines, The sensitivity of hybrid systems optimal cost
  functions with respect to switching manifold parameters, in: R.~Majumdar,
  P.~Tabuada (Eds.), Hybrid Systems: Computation and Control, Springer Berlin
  Heidelberg, Berlin, Heidelberg, 2009, pp. 475--479.

\bibitem{Hong2017}
H.~Zhang, S.~Abhyankar, E.~Constantinescu, M.~Anitescu, Discrete adjoint
  sensitivity analysis of hybrid dynamical systems with switching, IEEE
  Transactions on Circuits and Systems I: Regular Papers 64 (2017) 13.
\newblock \href {http://dx.doi.org/10.1109/TCSI.2017.2651683}
  {\path{doi:10.1109/TCSI.2017.2651683}}.

\bibitem{Corner2017}
S.~Corner, C.~Sandu, A.~Sandu, Modeling and sensitivity analysis methodology
  for hybrid dynamical systems, arXiv preprint arXiv:1710.04292.

\bibitem{Pace2017}
A.~M. Pace, S.~A. Burden,
  \href{http://doi.acm.org/10.1145/3049797.3049807}{Piecewise - differentiable
  trajectory outcomes in mechanical systems subject to unilateral constraints},
  in: Proceedings of the 20th International Conference on Hybrid Systems:
  Computation and Control, HSCC '17, ACM, New York, NY, USA, 2017, pp.
  243--252.
\newblock \href {http://dx.doi.org/10.1145/3049797.3049807}
  {\path{doi:10.1145/3049797.3049807}}.
\newline\urlprefix\url{http://doi.acm.org/10.1145/3049797.3049807}

\bibitem{Dopico2014}
D.~Dopico, A.~Sandu, C.~Sandu, Y.~Zhu,
  \href{https://doi.org/10.1007/978-3-319-07260-9_1}{Sensitivity analysis of
  multibody dynamic systems modeled by odes and daes.}, In: Terze Z. (eds)
  Multibody Dynamics. Computational Methods in Applied Sciences, vol 35.
\newblock \href {http://dx.doi.org/978-3-319-07260-9}
  {\path{doi:978-3-319-07260-9}}.
\newline\urlprefix\url{https://doi.org/10.1007/978-3-319-07260-9_1}

\bibitem{Sandu_2014_sensitivity_ODE_multibody}
D.~Dopico, A.~Sandu, Y.~Zhu, C.~Sandu,
  \href{http://dx.doi.org/10.1115/1.4026492}{Direct and adjoint sensitivity
  analysis of ordinary differential equation multibody formulations}, Journal
  of Computational and Nonlinear Dynamics 10~(1) (2014) 011012 (7 pages).
\newblock \href {http://dx.doi.org/10.1115/1.4026492}
  {\path{doi:10.1115/1.4026492}}.
\newline\urlprefix\url{http://dx.doi.org/10.1115/1.4026492}

\bibitem{Sandu_2014_MBSVT}
Y.~Zhu, D.~Dopico, A.~Sandu, C.~Sandu.
\newblock \href{http://people.cs.vt.edu/~asandu/Software/MBSVT}{Mbsvt. a
  library for the simulation and optimization of multibody systems} [online]
  (2014) [cited January 2015].

\bibitem{Zhu_2014_PhD}
Y.~Zhu, Sensitivity analysis and optimization of multibody systems, Ph.D.
  thesis, Virginia Tech (2014).

\bibitem{Sandu_2017_vehicleOptimization}
Y.~Zhu, D.~Dopico, C.~Sandu, A.~Sandu, Optimization of vehicle dynamics based
  on multibody models using adjoint sensitivity analysis, Journal of
  Computational and Nonlinear Dynamics submitted.

\bibitem{Kolathaya2016}
S.~Kolathaya, A.~D. Ames, Parameter to state stability of control lyapunov
  functions for hybrid system models of robots, Nonlinear Analysis: Hybrid
  Systems (2016) --\href
  {http://dx.doi.org/http://dx.doi.org/10.1016/j.nahs.2016.09.003}
  {\path{doi:http://dx.doi.org/10.1016/j.nahs.2016.09.003}}.

\bibitem{Ballard2000}
P.~Ballard, \href{https://doi.org/10.1007/s002050000105}{The dynamics of
  discrete mechanical systems with perfect unilateral constraints}, Archive for
  Rational Mechanics and Analysis 154~(3) (2000) 199--274.
\newblock \href {http://dx.doi.org/10.1007/s002050000105}
  {\path{doi:10.1007/s002050000105}}.
\newline\urlprefix\url{https://doi.org/10.1007/s002050000105}

\bibitem{Schaffer2005}
A.~S. Schaffer, On the adjoint formulation of design sensitivity analysis of
  multibody dynamics, Ph.D. thesis, The University of Iowa (2005).

\end{thebibliography}

\end{document}